\newcommand{\sy}{\boldsymbol{\Psi}}
\newcommand{\py}{\boldsymbol{\Phi}}
\newcommand{\T}{\mathbb{T}^N}
\newcommand{\N}{\mathbb{N}}									%basic sets
\newcommand{\R}{\mathbb{R}}
\newcommand{\vertiii}[1]{{\left\vert\kern-0.25ex\left\vert\kern-0.25ex\left\vert #1 
    \right\vert\kern-0.25ex\right\vert\kern-0.25ex\right\vert}}
\newcommand{\overbar}[1]{\mkern 1.5mu\overline{\mkern-1.5mu#1\mkern-1.5mu}\mkern 1.5mu}
\newcommand{\inner}[2]{\langle #1, #2 \rangle}
\DeclarePairedDelimiter\norm{\lVert}{\rVert}				%norm
\newtheorem{theorem}{Theorem}[section]
\newtheorem{lemma}[theorem]{Lemma}
\newtheorem{proposition}[theorem]{Proposition}
\newtheorem*{remark}{Remark}
\newtheorem{definition}[theorem]{Definition}
\newtheorem{assumption}[theorem]{Assumption}
\begin{document}
	\title{Improved Blow-Up Criterion in a Variational Framework for Nonlinear SPDEs}
	\author{Daniel Goodair\footnote{EPFL, 
    daniel.goodair@epfl.ch. Supported
    by the EPSRC Project 2478902.}}
	\date{\today} 

\setcitestyle{numbers}	

\maketitle

\begin{abstract}
    We extend recent existence and uniqueness results for maximal solutions of SPDEs through an improved blow-up criterion. Whilst the maximal time of existence is typically characterised by blow-up in the energy norm of solutions, we show instead that solutions exist until blow-up in the larger spaces of the variational framework. The result is applied to show that solutions of 2D and 3D Stochastic Navier-Stokes Equations retain the higher order regularity of the initial condition on their time of existence. 
\end{abstract}

%%%%%%%%%%%%%%%%%%%%%%%%%%%%%%%%%%%%%%%%%%%%%%%%%%%%%%%%%%%%%%%%%%%
%%                                                               %%
%% No need for \maketitle.                                       %%
%%                                                               %%
%%%%%%%%%%%%%%%%%%%%%%%%%%%%%%%%%%%%%%%%%%%%%%%%%%%%%%%%%%%%%%%%%%%

%%%%%%%%%%%%%%%%%%%%%%%%%%%%%%%%%%%%%%%%%%%%%%%%%%%%%%%%%%%%%%%%%%%
%%                                                               %%
%% Please replace what follows by the body of your article       %%
%% (up to the bibliography):                                     %%
%%                                                               %%
%%%%%%%%%%%%%%%%%%%%%%%%%%%%%%%%%%%%%%%%%%%%%%%%%%%%%%%%%%%%%%%%%%%

\section{Introduction}

The variational approach to nonlinear SPDEs with additive and multiplicative noise has long been studied, initially in the works [\cite{gyongy1980stochastic}, \cite{krylov2007stochastic}, \cite{pardoux1975equations}] and more recently [\cite{debussche2011local}, \cite{liu2010spde}, \cite{liu2013local}, \cite{neelima2020coercivity}] to name just a few contributions. Motivated by the physical relevance and potential regularising properties of \textit{transport noise}, where the stochastic integral depends on the gradient of the solution, there has been a trend towards unbounded noise in the variational framework as in [\cite{agresti2022nonlinear}, \cite{agresti2024critical}, \cite{alonso2021local}, \cite{goodair2024weak}, \cite{goodair2023existence},  \cite{rockner2022well}]. We similarly allow for an unbounded noise, which does not need to be small relative to coercivity.\\

Our result comes as a strict extension of the author's work [\cite{goodair2023existence}] with Crisan and Lang. There the authors proved the existence and uniqueness of maximal solutions with maximal time characterised by the blow-up in the energy norm of solutions. Under the exact same assumptions, here we prove that the maximal time can in fact be characterised by blow-up in a weaker norm given by the larger spaces of the framework. The method relies on Proposition \ref{amazing cauchy lemma}, recently proven by the author as a development of [\cite{glatt2009strong}] Lemma 5.1. The proposition is used to deduce the existence of a limiting process and stopping time under Cauchy and weak equicontinuity properties, as in [\cite{glatt2009strong}], with the novelty being that one can characterise the limit stopping time. In our application the stopping time is understood in terms of the first hitting time in the weaker norm, immediately implying the existence of solutions until blow-up in this norm.\\

Of the referenced literature we draw particular attention to [\cite{agresti2022nonlinear}, \cite{alonso2021local}]. In the latter the authors similarly prove the existence and uniqueness of maximal solutions until blow-up in the largest of the considered spaces. Their framework, however, does not ask for coercivity and in consequence solutions exhibit only pathwise continuity and not the additional square integrability. In the commonplace application of fluid dynamics, this setup is designed for \textit{inviscid} equations whereas ours is for \textit{viscous} equations. The applications and methodology are thus completely different, and we see these results as complementary. The former reference of [\cite{agresti2022nonlinear}] is by far the most comprehensive treatment of blow-up criteria in nonlinear SPDEs, where the notion of criticality is at the core of their work and spaces can be selected much more finely. In the vastness of their theory it is not entirely clear to what extent [\cite{agresti2022nonlinear}] could cover our results, though in any case we find value in our work through its simplicity and novel methodology.\\

We showcase the main result by proving that (local) strong solutions of 2D and 3D Stochastic Navier-Stokes Equations retain the higher order regularity of the initial condition on their time of existence. The argument is a simple iterated application of the variational result, as for $k \geq 2$ solutions are shown to blow-up in $C\left([0,T];W^{k,2}\right) \cap L^2\left([0,T];W^{k+1,2}\right)$ only if they blow-up in $C\left([0,T];W^{k-1,2}\right) \cap L^2\left([0,T];W^{k,2}\right)$ and inductively in $C\left([0,T];W^{1,2}\right) \cap L^2\left([0,T];W^{2,2}\right).$ To succinctly verify the assumptions we consider only a Lipschitz noise, though more exotic structures such as transport noise can certainly be considered.\\

The structure of the paper is as follows. We conclude this section with some brief stochastic preliminaries. In Section \ref{appendix local theory} we provide the setup, definitions and main result. Section \ref{proof sect} is devoted to the proof of the main result. The application to high order regularity of Stochastic Navier-Stokes is given in Section \ref{sect applications}. The key Proposition \ref{amazing cauchy lemma} is given as Supplementary Material which concludes the paper.

\subsection{Stochastic Preliminaries}

Let $(\Omega,\mathcal{F},(\mathcal{F}_t), \mathbb{P})$ be a fixed filtered probability space satisfying the usual conditions of completeness and right continuity. We take $\mathcal{W}$ to be a Cylindrical Brownian Motion over some Hilbert Space $\mathfrak{U}$ with orthonormal basis $(e_i)$. Given a process $F:[0,T] \times \Omega \rightarrow \mathscr{L}^2(\mathfrak{U};\mathcal{H})$ for $\mathscr{L}^2(\mathfrak{U};\mathcal{H})$ the Hilbert-Schmidt space, progressively measurable and such that $F \in L^2\left(\Omega \times [0,T];\mathscr{L}^2(\mathfrak{U};\mathcal{H})\right)$, for any $0 \leq t \leq T$ we define the stochastic integral $$\int_0^tF_sd\mathcal{W}_s:=\sum_{i=1}^\infty \int_0^tF_s(e_i)dW^i_s,$$ where the infinite sum is taken in $L^2(\Omega;\mathcal{H})$. We can extend this notion to processes $F$ which are such that $F(\omega) \in L^2\left( [0,T];\mathscr{L}^2(\mathfrak{U};\mathcal{H})\right)$ for $\mathbb{P}-a.e.$ $\omega$ via the traditional localisation procedure. A complete, direct construction of this integral, a treatment of its properties and the fundamentals of stochastic calculus in infinite dimensions can be found in [\cite{goodair2022stochastic}] Section 1.

\section{Setup and Main Result}
\label{appendix local theory}

\subsection{Functional Framework} \label{sub functional for local}

Our object of study is the It\^{o} SPDE  \begin{equation} \label{thespde}
    \sy_t = \sy_0 + \int_0^t \mathcal{A}(s,\sy_s)ds + \int_0^t\mathcal{G} (s,\sy_s) d\mathcal{W}_s
\end{equation}
which we pose for a triplet of embedded, separable Hilbert Spaces $$V \hookrightarrow H \hookrightarrow U$$ whereby the embeddings are continuous linear injections. We ask that there is a continuous bilinear form $\inner{\cdot}{\cdot}_{U \times V}: U \times V \rightarrow \R$ such that for $f \in H$ and $\psi \in V$, \begin{equation}  \nonumber
    \inner{f}{\psi}_{U \times V} =  \inner{f}{\psi}_{H}.
\end{equation}
The equation (\ref{thespde}) is posed on a time interval $[0,T]$ for arbitrary $T \geq 0$. The mappings $\mathcal{A},\mathcal{G}$ are such that
    $\mathcal{A}:[0,T] \times V \rightarrow U,
    \mathcal{G}:[0,T] \times V \rightarrow \mathscr{L}^2(\mathfrak{U};H)$ are measurable. Understanding $\mathcal{G}$ as a mapping $\mathcal{G}: [0,T] \times V \times \mathfrak{U} \rightarrow H$, we introduce the notation $\mathcal{G}_i(\cdot,\cdot):= \mathcal{G}(\cdot,\cdot,e_i)$. We further impose the existence of a system of elements $(a_k)$ of $V$ with the following properties. Let us define the spaces $V_n:= \textnormal{span}\left\{a_1, \dots, a_n \right\}$ and $\mathcal{P}_n$ as the orthogonal projection to $V_n$ in $U$. 
    It is required that the $(\mathcal{P}_n)$ are uniformly bounded in $H$, which is to say that there exists a constant $c$ independent of $n$ such that for all $\phi \in H$, \begin{equation}  \nonumber
        \norm{\mathcal{P}_nf}_H \leq c\norm{f}_H.
    \end{equation}
We also suppose that there exists a real valued sequence $(\mu_n)$ with $\mu_n \rightarrow \infty$ such that for any $f \in H$, \begin{align}
      \nonumber
    \norm{(I - \mathcal{P}_n)f}_U \leq \frac{1}{\mu_n}\norm{f}_{H}
\end{align}
where $I$ represents the identity operator in $U$. Specific bounds on the mappings $\mathcal{A}$ and $\mathcal{G}$ will be imposed in the following subsection. In order to make the assumptions we introduce some more notation here: we shall let $c_{\cdot}:[0,T]\rightarrow \R$ denote any bounded function, and for any constant $p \in \R$ we define the functions $K_U: U \rightarrow \R$, $K_H: H \rightarrow \R$, $K_V: V \rightarrow \R$ by
\begin{equation} \nonumber
    K_U(\phi)= 1 + \norm{\phi}_U^p, \quad K_H(\phi)= 1 + \norm{\phi}_H^p, \quad K_V(\phi)= 1 + \norm{\phi}_V^p.
\end{equation}
We may also consider these mappings as functions of two variables, e.g. $K_U: U \times U \rightarrow \R$ by $$K_U(\phi,\psi) = 1 + \norm{\phi}_U^p + \norm{\psi}_U^p.$$
Our assumptions will be stated for `the existence of a $K$ such that...' where we really mean `the existence of a $p$ such that, for the corresponding $K$, ...'.

\subsection{Assumptions} \label{assumptionschapter}

 We assume that there exists a $c_{\cdot}$, $K$ and $\gamma > 0$ such that for all $\phi,\psi \in V$, $\phi^n \in V_n$, $f \in H$ and $t \in [0,T]$:
 
% \begin{assumption} \label{measurabilityassumption}
%For any $T>0$, $\mathcal{A}:[0,T] \times V \rightarrow U$ and  $\mathcal{G}:[0,T] \times V \rightarrow \mathscr{L}^2(\mathfrak{U};H)$ are measurable. 
%\end{assumption}

  \begin{assumption}   \begin{align}
      \nonumber \norm{\mathcal{A}(t,\phi)}^2_U +\sum_{i=1}^\infty \norm{\mathcal{G}_i(t,\phi)}^2_H &\leq c_t K_U(\phi)\left[1 + \norm{\phi}_V^2\right],\\  \nonumber
     \norm{\mathcal{A}(t,\phi) - \mathcal{A}(t,\psi)}_U^2 &\leq  c_tK_V(\phi,\psi)\norm{\phi-\psi}_V^2,\\  \nonumber
    \sum_{i=1}^\infty \norm{\mathcal{G}_i(t,\phi) - \mathcal{G}_i(t,\psi)}_U^2 &\leq c_tK_U(\phi,\psi)\norm{\phi-\psi}_H^2.
 \end{align}
 \end{assumption}

\begin{assumption} \label{uniform assumpt}
 \begin{align}
    \nonumber  2\inner{\mathcal{P}_n\mathcal{A}(t,\phi^n)}{\phi^n}_H + \sum_{i=1}^\infty\norm{\mathcal{P}_n\mathcal{G}_i(t,\phi^n)}_H^2 &\leq c_tK_U(\phi^n)\left[1 + \norm{\phi^n}_H^4\right] - \gamma\norm{\phi^n}_V^2,\\  \nonumber
    \sum_{i=1}^\infty \inner{\mathcal{P}_n\mathcal{G}_i(t,\phi^n)}{\phi^n}^2_H &\leq c_tK_U(\phi^n)\left[1 + \norm{\phi^n}_H^6\right].
\end{align}
\end{assumption}

\begin{assumption} 
\begin{align}
  \nonumber 2\inner{\mathcal{A}(t,\phi) - \mathcal{A}(t,\psi)}{\phi - \psi}_U &+ \sum_{i=1}^\infty\norm{\mathcal{G}_i(t,\phi) - \mathcal{G}_i(t,\psi)}_U^2\\  \nonumber &\leq  c_{t}K_U(\phi,\psi)\left[1 + \norm{\phi}_H^2 + \norm{\psi}_H^2\right] \norm{\phi-\psi}_U^2 - \gamma\norm{\phi-\psi}_H^2,\\  \nonumber
    \sum_{i=1}^\infty \inner{\mathcal{G}_i(t,\phi) - \mathcal{G}_i(t,\psi)}{\phi-\psi}^2_U & \leq c_{t} K_U(\phi,\psi)\left[1 + \norm{\phi}_H^2 + \norm{\psi}_H^2\right] \norm{\phi-\psi}_U^4.
\end{align}
\end{assumption}

\begin{assumption} \label{my2.4}
\begin{align}
    \nonumber 2\inner{\mathcal{A}(t,\phi)}{\phi}_U + \sum_{i=1}^\infty\norm{\mathcal{G}_i(t,\phi)}_U^2 &\leq c_tK_U(\phi)\left[1 +  \norm{\phi}_H^2\right],\\ \nonumber
    \sum_{i=1}^\infty \inner{\mathcal{G}_i(t,\phi)}{\phi}^2_U &\leq c_tK_U(\phi)\left[1 + \norm{\phi}_H^4\right].
\end{align}
\end{assumption}

\begin{assumption} 
 \begin{equation}  \nonumber
    \inner{\mathcal{A}(t,\phi)-\mathcal{A}(t,\psi)}{f}_U \leq c_tK_U(\phi,\psi)(1+\norm{f}_H)\left[1 + \norm{\phi}_V + \norm{\psi}_V\right]\norm{\phi-\psi}_H.
    \end{equation}
\end{assumption}

\subsection{Definitions and Main Result} \label{subsection:notionsofsolution}

We state the definitions and main result.

\begin{definition} \label{v valued local def}
Let $\sy_0:\Omega \rightarrow H$ be $\mathcal{F}_0-$ measurable. A pair $(\sy,\tau)$ where $\tau$ is a $\mathbbm{P}-a.s.$ positive stopping time and $\sy$ is a process such that for $\mathbbm{P}-a.e.$ $\omega$, $\sy_{\cdot}(\omega) \in C\left([0,T];H\right)$ and $\sy_{\cdot}(\omega)\mathbbm{1}_{\cdot \leq \tau(\omega)} \in L^2\left([0,T];V\right)$ for all $T \geq 0$ and with $\sy_{\cdot}\mathbbm{1}_{\cdot \leq \tau}$ progressively measurable in $V$, is said to be a local strong solution of the equation (\ref{thespde}) if the identity
\begin{equation} \nonumber
    \sy_{t} = \sy_0 + \int_0^{t\wedge \tau} \mathcal{A}(s,\sy_s)ds + \int_0^{t \wedge \tau}\mathcal{G} (s,\sy_s) d\mathcal{W}_s
\end{equation}
holds $\mathbbm{P}-a.s.$ in $U$ for all $t \geq 0$.
\end{definition}

\begin{definition} \label{V valued maximal definition}
A pair $(\sy,\Theta)$ such that there exists a sequence of stopping times $(\theta_j)$ which are $\mathbbm{P}-a.s.$ monotone increasing and convergent to $\Theta$, whereby $(\sy_{\cdot \wedge \theta_j},\theta_j)$ is a local strong solution of the equation (\ref{thespde}) for each $j$, is said to be a maximal strong solution of the equation (\ref{thespde}) if for any other pair $(\py,\Gamma)$ with this property then $\Theta \leq \Gamma$ $\mathbbm{P}-a.s.$ implies $\Theta = \Gamma$ $\mathbbm{P}-a.s.$.
\end{definition}

\begin{remark}
We do not require $\Theta$ to be finite in this definition, in which case we mean that the sequence $(\theta_j)$ is monotone increasing and unbounded for such $\omega$. 
\end{remark}

\begin{definition} \label{v valued maximal unique}
A maximal strong solution $(\sy,\Theta)$ of the equation (\ref{thespde}) is said to be unique if for any other such solution $(\py,\Gamma)$, then $\Theta = \Gamma$ $\mathbbm{P}-a.s.$ and \begin{equation} \nonumber\mathbbm{P}\left(\left\{\omega \in \Omega: \sy_{t}(\omega) =  \py_{t}(\omega)  \quad \forall t \in [0,\Theta) \right\} \right) = 1. \end{equation}
\end{definition}

\begin{theorem} \label{theorem1}
For any given $\mathcal{F}_0-$ measurable $\sy_0:\Omega \rightarrow H$, there exists a unique maximal strong solution $(\sy,\Theta)$ of the equation (\ref{thespde}). Moreover at $\mathbbm{P}-a.e.$ $\omega$ for which $\Theta(\omega)<\infty$, we have that \begin{equation} \nonumber \sup_{r \in [0,\Theta(\omega))}\norm{\sy_r(\omega)}_U^2 + \int_0^{\Theta(\omega)}\norm{\sy_r(\omega)}_H^2dr = \infty\end{equation}
and in consequence for any $\mathbbm{P}-a.s.$ positive stopping time $\tau$ such that $$\sup_{r \in [0,\tau(\omega))}\norm{\sy_r(\omega)}_U^2 + \int_0^{\tau(\omega)}\norm{\sy_r(\omega)}_H^2dr < \infty $$
$\mathbbm{P}-a.s.$, $(\sy_{\cdot \wedge \tau}, \tau)$ is a local strong solution of the equation (\ref{thespde}).
\end{theorem}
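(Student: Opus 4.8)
The plan is to take existence, uniqueness, and the energy-norm blow-up characterisation directly from the author's earlier work \cite{goodair2023existence}, which runs under the identical hypotheses, and then to \emph{upgrade} the blow-up criterion from the energy pair $(H,V)$ down to the weaker pair $(U,H)$ by a bootstrap argument. Thus let $(\sy,\Theta)$ be the maximal strong solution supplied by \cite{goodair2023existence}, with approximating local solutions $(\sy_{\cdot\wedge\theta_j},\theta_j)$, $\theta_j\uparrow\Theta$, enjoying the property that on $\{\Theta<\infty\}$ one has $\sup_{r<\Theta}\|\sy_r\|_H^2+\int_0^\Theta\|\sy_r\|_V^2\,dr=\infty$. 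The whole theorem then reduces to a single implication: if, up to a stopping time $\tau\le\Theta$ and on some event, the weaker quantity $\sup_{r<\tau}\|\sy_r\|_U^2+\int_0^\tau\|\sy_r\|_H^2\,dr$ is finite, then so is the energy quantity $\sup_{r<\tau}\|\sy_r\|_H^2+\int_0^\tau\|\sy_r\|_V^2\,dr$.

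To establish this implication I would introduce the weaker-norm hitting times $\tau_R=\inf\{t\in[0,\Theta):\sup_{r\le t}\|\sy_r\|_U^2+\int_0^t\|\sy_r\|_H^2\,dr\ge R\}$ (set equal to $\Theta$ if the level is never reached) and, to tame the superlinear terms, the auxiliary cut-offs $\sigma_M=\inf\{t:\|\sy_t\|_H\ge M\}$. On $[0,\tau_R\wedge\theta_j\wedge\sigma_M]$ all objects are genuinely $V$-valued and square integrable, so the It\^o formula for $\|\sy_t\|_H^2$ is licensed, and feeding in the coercive estimate of Assumption \ref{uniform assumpt} (transferred from the Galerkin elements $\phi^n\in V_n$ to the solution by passing $n\to\infty$ through the uniformly $H$-bounded projections $\mathcal{P}_n$) yields
\[
\|\sy_t\|_H^2+\gamma\int_0^t\|\sy_s\|_V^2\,ds \le \|\sy_0\|_H^2+\int_0^t\big[c_sK_U(\sy_s)+g_s\|\sy_s\|_H^2\big]\,ds+M_t,
\]
where $g_s:=c_sK_U(\sy_s)\|\sy_s\|_H^2$ and $M$ is the localised (hence genuine, locally square integrable by the second inequality of Assumption \ref{uniform assumpt}) martingale coming from the stochastic integral. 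The crucial point is that the offending quartic drift $c_sK_U(\sy_s)\|\sy_s\|_H^4$ has been rewritten as $g_s\|\sy_s\|_H^2$ with an \emph{integrable} random weight: the $U$-supremum bound makes $K_U(\sy)$ bounded on $[0,\tau_R]$, and the $\int\|\sy\|_H^2$ bound then forces $\int_0^{\tau_R}g_s\,ds\le C(R)$ \emph{deterministically}. The stochastic Gronwall lemma — which absorbs the martingale $M$ without requiring any control of its quadratic variation by $\|\sy\|_H^2$ — applies with this weight and, after letting $M\to\infty$ and $j\to\infty$ by monotone convergence, delivers finiteness of the energy quantity on $[0,\tau_R]$ for every $R$.

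With the implication in hand the three assertions follow quickly. For the blow-up characterisation, were the weaker quantity finite on a subset of $\{\Theta<\infty\}$ of positive probability, then there $\tau_R=\Theta$ for all large $R$ (depending on $\omega$), and the implication would force the energy quantity to be finite up to $\Theta$, contradicting the energy blow-up of \cite{goodair2023existence}; hence the weaker quantity diverges almost surely on $\{\Theta<\infty\}$. For the final consequence, a stopping time $\tau$ with the weaker quantity a.s.\ finite must satisfy $\tau<\Theta$ a.s.\ (otherwise the divergence just proved would be violated on $\{\Theta<\infty\}$), and the implication again furnishes $\sup_{r\le\tau}\|\sy_r\|_H<\infty$ together with $\int_0^\tau\|\sy_r\|_V^2\,dr<\infty$; passing to the limit $j\to\infty$ in the local-solution identity on $[0,\tau\wedge\theta_j]$ then certifies $(\sy_{\cdot\wedge\tau},\tau)$ as a local strong solution in the sense of Definition~\ref{v valued local def}.

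I expect the main obstacle to be making the integrable-weight stochastic Gronwall estimate airtight uniformly across the localisations $\theta_j,\sigma_M$, where the interplay between the boundedness of $K_U(\sy)$ and the time-integrability of $\|\sy\|_H^2$ is what converts a superlinear, potentially finite-time-blow-up drift into a Gronwall-admissible weight; a secondary technical point is transferring the coercive Assumption \ref{uniform assumpt} from the finite-dimensional spaces $V_n$ to the limiting solution and justifying the It\^o formula for $\|\sy\|_H^2$ in the triplet $V\hookrightarrow H\hookrightarrow U$. The abstract Cauchy-and-equicontinuity machinery of Proposition~\ref{amazing cauchy lemma}, which identifies a limiting process together with its limiting stopping time as a hitting time, offers an alternative and more structural route, directly producing a solution whose lifespan is the first hitting time of $\infty$ in the weaker norm.
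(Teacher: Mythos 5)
Your reduction of the theorem to the single implication ``finiteness in the $UH$ norm up to $\tau\le\Theta$ implies finiteness in the $HV$ norm up to $\tau$'' is logically sound, and your weight trick is the right structural intuition: on $[0,\tau_R]$ the quantity $g_s=c_sK_U(\sy_s)\norm{\sy_s}_H^2$ is indeed deterministically integrable, and this interplay is exactly what drives the paper's Galerkin-level bound (Proposition \ref{theorem:uniformbounds}), where the $HV$ norm is controlled up to first hitting times in the $UH$ norm. The genuine gap is \emph{where} you run the estimate. Assumption \ref{uniform assumpt} is a statement about the projected operators on the finite-dimensional spaces $V_n$ only, and the framework provides no mechanism to transfer it to the limit solution: the projections $\mathcal{P}_n$ are orthogonal in $U$ and uniformly bounded in $H$, but nothing whatsoever is assumed about them on $V$, so for $\phi\in V$ the sequence $\mathcal{P}_n\phi$ need not be bounded in $V$, let alone convergent there, and since $\mathcal{A}$ is only locally Lipschitz from $V$ into $U$ you cannot pass $n\to\infty$ in $\inner{\mathcal{P}_n\mathcal{A}(t,\mathcal{P}_n\phi)}{\mathcal{P}_n\phi}_H$ to manufacture an unprojected coercivity inequality. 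Compounding this, $\mathcal{A}(s,\sy_s)$ takes values only in $U$, so the It\^{o} formula for $\norm{\sy_t}_H^2$ that your bootstrap requires is not licensed by the standing assumptions, which pair $U$-valued quantities against $V$ only through $\inner{\cdot}{\cdot}_{U\times V}$ and are deliberately structured so that all energy estimates occur at the Galerkin level. What you flag as a ``secondary technical point'' is therefore the main obstruction, and it is precisely the difficulty this paper was written to circumvent.

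The paper's proof never applies It\^{o}'s formula to the limit. Instead it upgrades the weak equicontinuity at time zero (Proposition \ref{theorem: probability one}) to weak equicontinuity at an arbitrary stopping time (Lemma \ref{probably unnecessary lemma}) --- an estimate carried out on the Galerkin solutions, where Assumption \ref{my2.4} legitimately applies --- and feeds this, together with the Cauchy property (Proposition \ref{theorem:cauchygalerkin}), into Proposition \ref{amazing cauchy lemma}. The payoff of that proposition is exactly the identification your argument lacks: for any $R$ and $t$ one may choose $M$ so that the limiting stopping time $\tau^{M,t}_{\infty}$, up to which $(\sy,\tau^{M,t}_{\infty})$ is already known to be a local strong solution with the inherited $HV$ regularity, dominates the hitting time $\tau^{R,t}$ of the \emph{limit} in the $UH$ norm; since maximality forces $\tau^{R,t}\le\Theta$, the contradiction argument you sketch in your third paragraph then goes through verbatim, and your treatment of the second assertion (passing along $\theta_j\wedge\tau$, with the case distinction on $\{\tau=\infty\}$) matches the paper's. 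In short: your endgame is correct, but the bridge --- the bootstrap implication for the limit process --- cannot be built from Assumption \ref{uniform assumpt} as stated, and the route you relegate to a closing remark as an ``alternative'' (Proposition \ref{amazing cauchy lemma}) is, within this framework, not an alternative but the proof.
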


\section{Proof of Theorem \ref{theorem1}} \label{proof sect}

This section is devoted to the proof of the main result, Theorem \ref{theorem1}. We recall that the assumptions are identical to those of [\cite{goodair2023existence}] Subsection 3.1. As an extension of [\cite{goodair2023existence}] Theorem 3.15, our first goal of this section is to summarise the method used in [\cite{goodair2023existence}]; this is the content of Subsection \ref{le synops}. Subsection \ref{le improve} then details how our new machinery of Proposition \ref{amazing cauchy lemma} facilitates the improved result of Theorem \ref{theorem1}, concluding its proof.

\subsection{A Synopsis of Our Approach} \label{le synops}

We first consider a bounded initial condition $\sy_0 \in L^{\infty}(\Omega;H)$ and the Galerkin Equations
\begin{equation} \label{nthorderGalerkin}
       \sy^n_t = \sy^n_0 + \int_0^t \mathcal{P}_n\mathcal{A}(s,\sy^n_s)ds + \int_0^t\mathcal{P}_n\mathcal{G} (s,\sy^n_s) d\mathcal{W}_s
\end{equation}
for $\sy^n_0:= \mathcal{P}_n\sy_0$ and $\mathcal{P}_n\mathcal{G} (e_i,s,\cdot):=\mathcal{P}_n\mathcal{G}_{i}(s,\cdot).$ Central to this work are two norms, for functions $\py \in L^\infty([0,T];U) \cap L^2([0,T];H)$, $\sy \in L^\infty([0,T];H) \cap L^2([0,T];V)$ defined by \begin{align} \nonumber 
    \norm{\py}^2_{UH,T}:&= \sup_{r \in [0,T]}\norm{\py_{r}}^2_U + \int_0^T\norm{\py_{r}}^2_Hdr\\
     \norm{\sy}^2_{HV,T}:&= \sup_{r \in [0,T]}\norm{\sy_{r}}^2_H + \int_0^T\norm{\sy_{r}}^2_Vdr.
     \nonumber 
\end{align}
The $HV$ norm corresponds to the regularity of strong solutions, so our idea is to show uniform regularity of the Galerkin Solutions in the $HV$ norm up until first hitting times in the lower $UH$ norm which sufficiently curbs the nonlinearity. These stopping times are defined for any $M > 1$ and $t \geq 0$ by 
\begin{equation}\label{tauMtn}\tau^{M,t}_n := t \wedge \inf\left\{s \geq 0: \norm{\sy^{n}}^2_{UH,s}  \geq M + \norm{\sy^n_0(\omega)}_U^2 \right\}.\end{equation}
For any such choices there exists a local strong solution $(\sy^n, \tau^{M,t}_n)$ of the equation (\ref{nthorderGalerkin}), see [\cite{goodair2023existence}] Lemma 3.18. Relying on Assumption \ref{uniform assumpt} then the uniform boundedness is proven, Proposition 3.21, stated here.

\begin{proposition} \label{theorem:uniformbounds}
There exists a constant $C$ dependent on $M,t$ but independent of $n$ such that for the local strong solution $(\sy^n, \tau^{M,t}_n)$ of (\ref{nthorderGalerkin}), \begin{equation} \label{firstresultofuniformbounds}
    \mathbbm{E}\norm{\sy^n}_{HV,\tau^{M,t}_{n}}^2\leq C\left[\mathbbm{E}\left(\norm{\sy^n_{0}}_H^2\right) + 1\right].
\end{equation}

\end{proposition}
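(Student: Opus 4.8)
The plan is to derive the estimate \eqref{firstresultofuniformbounds} via an It\^o--type energy argument applied to the Galerkin solution $\sy^n$, carefully localised at the stopping time $\tau^{M,t}_n$ so that the $UH$--norm bound built into \eqref{tauMtn} can be used to control the nonlinear terms. First I would apply the finite-dimensional It\^o formula to $\norm{\sy^n_{r}}_H^2$. This is legitimate because each $\sy^n$ lives in the finite-dimensional space $V_n$ and solves the genuine (finite-dimensional) SDE \eqref{nthorderGalerkin}, so no infinite-dimensional It\^o subtleties arise. Using the compatibility $\inner{f}{\psi}_{U\times V}=\inner{f}{\psi}_H$ together with the uniform $H$--boundedness of the projections $\mathcal{P}_n$, the It\^o expansion gives, for $r\le \tau^{M,t}_n$,
\begin{equation} \nonumber
\norm{\sy^n_{r}}_H^2 = \norm{\sy^n_0}_H^2 + \int_0^r\Big(2\inner{\mathcal{P}_n\mathcal{A}(s,\sy^n_s)}{\sy^n_s}_H + \sum_{i=1}^\infty \norm{\mathcal{P}_n\mathcal{G}_i(s,\sy^n_s)}_H^2\Big)ds + M^n_r,
\end{equation}
where $M^n_r$ is the local martingale $2\sum_i\int_0^r \inner{\mathcal{P}_n\mathcal{G}_i(s,\sy^n_s)}{\sy^n_s}_H\,dW^i_s$.

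The crucial step is to invoke Assumption \ref{uniform assumpt}, whose first inequality bounds the drift--plus--It\^o--correction precisely by $c_s K_U(\sy^n_s)[1+\norm{\sy^n_s}_H^4]-\gamma\norm{\sy^n_s}_V^2$. The key observation is that for $s\le \tau^{M,t}_n$ the factor $K_U(\sy^n_s)=1+\norm{\sy^n_s}_U^p$ is deterministically controlled by a constant depending only on $M$ and $\norm{\sy^n_0}_U$ (hence, after taking expectations and using $\sy_0\in L^\infty(\Omega;H)$, by a constant depending on $M$ only), since the $\sup_{r\le s}\norm{\sy^n_r}_U^2$ contribution to $\norm{\sy^n}_{UH,s}^2$ is capped at $M+\norm{\sy^n_0}_U^2$. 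This is exactly the mechanism by which the lower $UH$ hitting time tames the nonlinearity. Absorbing $K_U$ into the time--dependent constant, the negative $-\gamma\norm{\sy^n_s}_V^2$ term can be moved to the left, so that after taking a supremum over $r\in[0,t']$ (for $t'$ the running time in the stopping rule) and then expectation, the $\int_0^{\tau}\norm{\sy^n_s}_V^2\,ds$ term — the second half of the $HV$ norm — is retained on the left with coefficient $\gamma$.

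To close the estimate I would take the supremum in time and then expectation, handling the martingale term $M^n_r$ by the Burkholder--Davis--Gundy inequality; its quadratic variation is $4\sum_i\int_0^r\inner{\mathcal{P}_n\mathcal{G}_i(s,\sy^n_s)}{\sy^n_s}_H^2\,ds$, which is controlled by the \emph{second} inequality of Assumption \ref{uniform assumpt}, giving a bound by $c_s K_U(\sy^n_s)[1+\norm{\sy^n_s}_H^6]$. After a BDG application one obtains a term of the form $\tfrac12\mathbb{E}\sup_{r}\norm{\sy^n_r}_H^2$ (absorbed into the left) plus lower order terms in $\mathbb{E}\int_0^\tau(1+\norm{\sy^n_s}_H^2)\,ds$, again with the $K_U$ factor uniformly bounded on $[0,\tau^{M,t}_n]$. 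The resulting integral inequality for $g(t'):=\mathbb{E}\norm{\sy^n}_{HV,\,\tau^{M,t'}_n}^2$ has the Gr\"onwall form $g(t')\le C_1(\mathbb{E}\norm{\sy^n_0}_H^2+1)+C_2\int_0^{t'}g(s)\,ds$, and Gr\"onwall's inequality yields \eqref{firstresultofuniformbounds} with $C$ depending on $M,t$ but crucially \emph{not} on $n$, since every constant traces back only to the uniform projection bound $c$, the structural constants in Assumption \ref{uniform assumpt}, and the cap $M$.

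The main obstacle I anticipate is the bookkeeping needed to ensure $n$--independence throughout: one must check that the uniform $H$--boundedness of $(\mathcal{P}_n)$ is the only place the projections enter the constants, that the BDG constant and the Gr\"onwall constant are universal, and that the localisation at $\tau^{M,t}_n$ genuinely renders $K_U(\sy^n_s)$ bounded by an $n$--independent quantity (using $\norm{\mathcal{P}_n\sy_0}_U\le c\norm{\sy_0}_H$ or the analogous $U$--bound). A secondary technical point is the standard but slightly delicate justification that the local martingale $M^n$ is a genuine martingale up to the stopping time so that its expectation vanishes and BDG applies — this follows from the a priori finite--dimensional integrability of $\sy^n$ on $[0,\tau^{M,t}_n]$, but should be stated. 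None of these steps is deep; the content is the interplay, encoded in Assumption \ref{uniform assumpt}, between the coercive $-\gamma\norm{\cdot}_V^2$ gain and the $U$--capped nonlinearity.
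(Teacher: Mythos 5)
Your overall strategy is the right one and coincides in outline with the proof the paper points to ([\cite{goodair2023existence}] Proposition 3.21): finite-dimensional It\^o on $\norm{\sy^n_r}_H^2$, the first inequality of Assumption \ref{uniform assumpt} to retain the coercive $-\gamma\norm{\cdot}_V^2$ on the left, the cap $\sup_{r \leq \tau^{M,t}_n}\norm{\sy^n_r}_U^2 \leq M + \norm{\sy^n_0}_U^2$ (with $\sy_0 \in L^\infty(\Omega;H)$ and $\norm{\sy^n_0}_U \leq c\norm{\sy_0}_H$) to tame $K_U$, and Burkholder--Davis--Gundy for the martingale. The gap is in your closing Gr\"onwall step. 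After $K_U$ is capped, the drift bound is $c\left(1+\norm{\sy^n_s}_H^4\right)$ and the quadratic variation bound is $c\left(1+\norm{\sy^n_s}_H^6\right)$; these are \emph{not} linear in $\norm{\sy^n_s}_H^2$, so the asserted inequality $g(t') \leq C_1\left(\mathbbm{E}\norm{\sy^n_0}_H^2+1\right) + C_2\int_0^{t'}g(s)\,ds$ for $g(t') := \mathbbm{E}\norm{\sy^n}^2_{HV,\tau^{M,t'}_n}$ does not follow from the manipulations you describe. Splitting $\norm{\sy^n_s}_H^4 \leq \norm{\sy^n_s}_H^2\,\sup_{u \leq s}\norm{\sy^n_u}_H^2$ leaves a product inside the expectation that cannot be factored: $\mathbbm{E}\left[\mathbbm{1}_{s \leq \tau^{M,t}_n}\norm{\sy^n_s}_H^2\,\sup_{u \leq s \wedge \tau^{M,t}_n}\norm{\sy^n_u}_H^2\right]$ is controlled by fourth moments, not by $g(s)$, and the crude bound by the supremum alone produces $C\,\mathbbm{E}\sup_{s}\norm{\sy^n_s}_H^2$ with a constant of order $t+M$ that cannot be absorbed into the left-hand side. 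So the deterministic-in-expectation Gr\"onwall form you write down would fail exactly on the quartic and sextic powers that Assumption \ref{uniform assumpt} deliberately permits.

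What closes the argument --- and what you never invoke --- is the \emph{other} half of the stopping-time cap: by (\ref{tauMtn}) one also has the pathwise bound $\int_0^{\tau^{M,t}_n}\norm{\sy^n_s}_H^2\,ds \leq M + \norm{\sy^n_0}_U^2$. Writing $1+\norm{\sy^n_s}_H^4 \leq \left(1+\norm{\sy^n_s}_H^2\right)\left(1+\norm{\sy^n_s}_H^2\right)$, the energy inequality takes the form $dX_s \leq R_sX_s\,ds - \gamma\norm{\sy^n_s}_V^2\,ds + dM_s$ with $X_s = 1+\norm{\sy^n_s}_H^2$ and coefficient $R_s = c\left(1+\norm{\sy^n_s}_H^2\right)$ satisfying $\int_0^{\tau^{M,t}_n}R_s\,ds \leq c\left(t + M + \norm{\sy^n_0}_U^2\right)$ $\mathbbm{P}-a.s.$, a deterministic constant depending on $M,t$ only; one then concludes with a \emph{stochastic} Gr\"onwall lemma ([\cite{glatt2009strong}] Lemma 5.3, used in precisely this way in [\cite{goodair2023existence}]) --- equivalently, a pathwise exponential Gr\"onwall with the a.s.\ bounded integrating factor, followed by expectation and BDG. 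The same pathwise cap handles the martingale term via $1+\norm{\phi}_H^6 \leq \left(1+\norm{\phi}_H^2\right)\left(1+\norm{\phi}_H^4\right)$, where the $\left(1+\norm{\phi}_H^2\right)^{1/2}$ factor is pulled out as a supremum and absorbed after Young's inequality, and the remaining quartic integral is again bounded pathwise. With this replacement your proof goes through; the rest of your bookkeeping ($n$-independence entering only through the uniform projection bound, the taming of $K_U$, and the localisation making the local martingale genuine) is correct as stated.
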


We then look to use the result of Glatt-Holtz and Ziane, [\cite{glatt2009strong}] Lemma 5.1, to obtain a limiting process and positive stopping time as a candidate local strong solution of (\ref{thespde}). It is our extension of this result to Proposition \ref{amazing cauchy lemma} that is pivotal in the improved Theorem \ref{theorem1}, shown in the next subsection. To apply the Glatt-Holtz and Ziane result, the following were proven as Propositions 3.24 and 3.25.

\begin{proposition} \label{theorem:cauchygalerkin}
We have that
\begin{equation} \label{cauchy result pt 2}\lim_{m \rightarrow \infty}\sup_{n \geq m}\left[\mathbbm{E}\norm{\sy^n - \sy^m}_{UH,\tau^{M,t}_m \wedge \tau^{M,t}_n}^2\right] = 0.\end{equation}
\end{proposition}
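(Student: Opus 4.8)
The plan is to apply the It\^o formula to $\norm{\sy^n_{\cdot} - \sy^m_{\cdot}}_U^2$ for $n \geq m$, run an energy estimate up to the joint stopping time $\rho := \tau^{M,t}_m \wedge \tau^{M,t}_n$, and close with a stochastic Gronwall argument whose \emph{deterministic} rate bound is furnished precisely by the $UH$ stopping rule. The difference process solves an It\^o equation with drift $\mathcal{P}_n\mathcal{A}(s,\sy^n_s) - \mathcal{P}_m\mathcal{A}(s,\sy^m_s)$ and diffusion coefficients $\mathcal{P}_n\mathcal{G}_i(s,\sy^n_s) - \mathcal{P}_m\mathcal{G}_i(s,\sy^m_s)$, so the whole difficulty is in organising the mismatched projections.

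The central algebraic observation is that, since $V_m \subset V_n$, one has $\mathcal{P}_m\mathcal{P}_n = \mathcal{P}_m$ and hence $\mathcal{P}_n - \mathcal{P}_m = (I-\mathcal{P}_m)\mathcal{P}_n$, while $\sy^n_s - \sy^m_s \in V_n$ so that $\mathcal{P}_n(\sy^n_s - \sy^m_s) = \sy^n_s - \sy^m_s$ and $(I-\mathcal{P}_m)(\sy^n_s - \sy^m_s) = (I-\mathcal{P}_m)\sy^n_s$. Using self-adjointness of the projections in $U$, I would rewrite the drift pairing as
\[
\inner{\mathcal{P}_n\mathcal{A}(s,\sy^n_s) - \mathcal{P}_m\mathcal{A}(s,\sy^m_s)}{\sy^n_s - \sy^m_s}_U = \inner{\mathcal{A}(s,\sy^n_s) - \mathcal{A}(s,\sy^m_s)}{\sy^n_s - \sy^m_s}_U + \inner{\mathcal{A}(s,\sy^m_s)}{(I-\mathcal{P}_m)\sy^n_s}_U,
\]
and split each diffusion coefficient as $\mathcal{P}_n[\mathcal{G}_i(s,\sy^n_s) - \mathcal{G}_i(s,\sy^m_s)] + (I-\mathcal{P}_m)\mathcal{P}_n\mathcal{G}_i(s,\sy^m_s)$. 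The \emph{matched} contributions combine, using the $U$-contractivity of $\mathcal{P}_n$, into $2\inner{\mathcal{A}(s,\sy^n_s) - \mathcal{A}(s,\sy^m_s)}{\sy^n_s - \sy^m_s}_U + \sum_i\norm{\mathcal{G}_i(s,\sy^n_s) - \mathcal{G}_i(s,\sy^m_s)}_U^2$, so the third Assumption of Subsection~\ref{assumptionschapter} applies and yields the favourable coercive term $-\gamma\norm{\sy^n_s - \sy^m_s}_H^2$ alongside a factor $c_sK_U(\sy^n_s,\sy^m_s)[1+\norm{\sy^n_s}_H^2 + \norm{\sy^m_s}_H^2]$ multiplying $\norm{\sy^n_s - \sy^m_s}_U^2$. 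Every \emph{defect} contribution carries an explicit $(I-\mathcal{P}_m)$, which the spectral gap bound $\norm{(I-\mathcal{P}_m)\cdot}_U \leq \mu_m^{-1}\norm{\cdot}_H$ together with the uniform $H$-boundedness of $\mathcal{P}_n$ converts into a prefactor $\mu_m^{-1}$; the accompanying norms (such as $\norm{\mathcal{A}(s,\sy^m_s)}_U$ and $\sum_i\norm{\mathcal{G}_i(s,\sy^m_s)}_H^2$) are controlled through the first Assumption by $c_sK_U(\sy^m_s)[1+\norm{\sy^m_s}_V^2]$, and the cross terms are absorbed into the $\gamma$-coercivity after a Young inequality. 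After integrating in time and a Cauchy--Schwarz step these defects are dominated by $\mu_m^{-1}$ times $\norm{\sy^m}_{HV,\rho}$- and $\norm{\sy^n}_{UH,\rho}$-type quantities.

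Next I would take the supremum over time and expectation. The supremum of the stochastic integral is treated by Burkholder--Davis--Gundy, whose quadratic variation involves $\sum_i\inner{\sy^n_s - \sy^m_s}{\mathcal{P}_n\mathcal{G}_i(s,\sy^n_s) - \mathcal{P}_m\mathcal{G}_i(s,\sy^m_s)}_U^2$; after the same projection bookkeeping this is controlled via the second inequality of the third Assumption (scaling like $\norm{\sy^n_s - \sy^m_s}_U^4$) plus $\mu_m^{-1}$ defects, and a Young inequality absorbs the resulting $\mathbbm{E}\sup_{r\leq t\wedge\rho}\norm{\sy^n_r - \sy^m_r}_U^2$ back to the left-hand side. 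Crucially, the $UH$ stopping rule makes the Gronwall rate deterministically bounded: on $[0,\rho]$ one has $\norm{\sy^n_s}_U^2, \norm{\sy^m_s}_U^2 \leq M + C\norm{\sy_0}_{L^\infty(\Omega;H)}^2$, so $K_U(\sy^n_s,\sy^m_s)$ is bounded by a constant depending only on $M$ and the data, while $\int_0^\rho[1 + \norm{\sy^n_s}_H^2 + \norm{\sy^m_s}_H^2]\,ds$ is bounded by the very inequality defining $\rho$. Hence $\int_0^\rho c_sK_U(\sy^n_s,\sy^m_s)[1+\norm{\sy^n_s}_H^2+\norm{\sy^m_s}_H^2]\,ds$ is bounded by a deterministic constant, and the stochastic Gronwall lemma produces
\[
\mathbbm{E}\norm{\sy^n - \sy^m}_{UH,\rho}^2 \leq C\left(\mathbbm{E}\norm{\sy^n_0 - \sy^m_0}_U^2 + \frac{1}{\mu_m}R_{n,m}\right),
\]
where $R_{n,m}$ collects the defect norms and is bounded uniformly in $n \geq m$ by Proposition~\ref{theorem:uniformbounds} and the boundedness of $\sy_0$.

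Finally I would let $m \to \infty$. Since $\sy^n_0 - \sy^m_0 = (I-\mathcal{P}_m)\mathcal{P}_n\sy_0$, the initial datum term obeys $\mathbbm{E}\norm{\sy^n_0 - \sy^m_0}_U^2 \leq c^2\mu_m^{-2}\mathbbm{E}\norm{\sy_0}_H^2 \to 0$ uniformly in $n \geq m$, and the defect term is $O(\mu_m^{-1}) \to 0$ by the uniform bounds; taking $\sup_{n\geq m}$ and then $m\to\infty$ gives the claim. The main obstacle is exactly the projection mismatch: one must verify that every term generated by the difference of the two distinct projections genuinely carries the gain $\mu_m^{-1}$ and that its companion norm is among the quantities uniformly bounded in Proposition~\ref{theorem:uniformbounds}—in particular that the full $\norm{\sy^n}_{HV,\rho}$ regularity, not merely $\norm{\sy^n}_{UH,\rho}$, is invoked—since it is this structure alone that drives the Cauchy property. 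A secondary care point is the randomness of the Gronwall rate, which is neutralised by the deterministic integral bound the $UH$ stopping rule supplies.
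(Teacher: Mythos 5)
Your proposal is correct and follows essentially the same route as the paper's proof of Proposition~\ref{theorem:cauchygalerkin} (deferred to [\cite{goodair2023existence}], Proposition 3.24): It\^{o}'s formula for the $U$-norm of the difference, the decomposition $\mathcal{P}_n - \mathcal{P}_m = (I-\mathcal{P}_m)\mathcal{P}_n$ extracting the factor $\mu_m^{-1}$ against the $HV$-regularity of Proposition~\ref{theorem:uniformbounds}, the third Assumption with its $\gamma$-coercivity for the matched terms, and Burkholder--Davis--Gundy plus a Gronwall argument whose rate is deterministically bounded by the $UH$ stopping rule. No gaps to flag.
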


\begin{proposition} \label{theorem: probability one}
We have that 
\begin{equation} \label{sufficient thing in probability condition}
    \lim_{S \rightarrow 0}\sup_{n \in \mathbb{N}}\mathbbm{E}\left[\norm{\sy^{n}}^2_{UH,\tau^{M,t}_n \wedge S} - \norm{\sy^n_0}_{U}^2 \right] = 0.
\end{equation}
\end{proposition}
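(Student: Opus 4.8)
The plan is to control the stated quantity through an It\^o expansion of $\norm{\sy^n_t}_U^2$, reducing everything to time integrals that are $O(S)$ uniformly in $n$. Writing $\sigma_n := \tau^{M,t}_n \wedge S$ and recalling the definition of the $UH$ norm, I first decompose
\begin{equation} \nonumber
\norm{\sy^n}^2_{UH,\sigma_n} - \norm{\sy^n_0}_U^2 = \left(\sup_{r \in [0,\sigma_n]}\norm{\sy^n_r}_U^2 - \norm{\sy^n_0}_U^2\right) + \int_0^{\sigma_n}\norm{\sy^n_r}_H^2 \, dr.
\end{equation}
Since $\sy^n$ is $V_n$-valued and $\mathcal{P}_n$ is the orthogonal (hence self-adjoint and contractive) projection onto $V_n$ in $U$, It\^o's formula in the finite-dimensional space $V_n$ gives
\begin{equation} \nonumber
\norm{\sy^n_r}_U^2 - \norm{\sy^n_0}_U^2 = \int_0^r\left(2\inner{\mathcal{A}(s,\sy^n_s)}{\sy^n_s}_U + \sum_{i=1}^\infty\norm{\mathcal{P}_n\mathcal{G}_i(s,\sy^n_s)}_U^2\right)ds + N^n_r,
\end{equation}
with $N^n_r := 2\sum_{i=1}^\infty\int_0^r\inner{\mathcal{G}_i(s,\sy^n_s)}{\sy^n_s}_U \, dW^i_s$, where I have used $\inner{\mathcal{P}_n\mathcal{A}}{\sy^n_s}_U = \inner{\mathcal{A}}{\sy^n_s}_U$ and likewise for each $\mathcal{G}_i$, since $\mathcal{P}_n\sy^n_s = \sy^n_s$.

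Bounding the drift from above, after discarding $\mathcal{P}_n$ in the noise term by contractivity, via the first estimate of Assumption \ref{my2.4}, and combining with the decomposition, I would reach
\begin{equation} \nonumber
\norm{\sy^n}^2_{UH,\sigma_n} - \norm{\sy^n_0}_U^2 \leq \int_0^{\sigma_n}\norm{\sy^n_s}_H^2\,ds + \int_0^{\sigma_n}c_s K_U(\sy^n_s)\left[1 + \norm{\sy^n_s}_H^2\right]ds + \sup_{r \in [0,\sigma_n]}\abs{N^n_r}.
\end{equation}
On the stochastic interval $[0,\tau^{M,t}_n]$ the stopping time forces $\norm{\sy^n_s}_U^2 \leq M + \norm{\sy^n_0}_U^2$, and since $\sy^n_0 = \mathcal{P}_n\sy_0$ with $\sy_0 \in L^\infty(\Omega;H)$ and $H \hookrightarrow U$, this yields a deterministic, $n$-independent bound $K_U(\sy^n_s) \leq C_M$. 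The terms still carrying $\norm{\sy^n_s}_H^2$ are not pointwise controlled by the stopping time, so I would resolve them using Proposition \ref{theorem:uniformbounds}, which supplies $\sup_n\mathbbm{E}\sup_{r \in [0,\tau^{M,t}_n]}\norm{\sy^n_r}_H^2 \leq C$. Writing $\int_0^{\sigma_n}\norm{\sy^n_s}_H^2\,ds = \int_0^S\norm{\sy^n_s}_H^2\mathbbm{1}_{s<\tau^{M,t}_n}\,ds$ and applying Tonelli with the pointwise estimate $\norm{\sy^n_s}_H^2\mathbbm{1}_{s<\tau^{M,t}_n} \leq \sup_{r\in[0,\tau^{M,t}_n]}\norm{\sy^n_r}_H^2$ delivers $\sup_n\mathbbm{E}\int_0^{\sigma_n}\norm{\sy^n_s}_H^2\,ds \leq CS$, so the first two contributions are $O(S)$ uniformly in $n$.

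For the martingale term I would apply the Burkholder--Davis--Gundy inequality and the second estimate of Assumption \ref{my2.4} (again using $K_U(\sy^n_s) \leq C_M$) to obtain
\begin{equation} \nonumber
\mathbbm{E}\sup_{r \in [0,\sigma_n]}\abs{N^n_r} \leq C\,\mathbbm{E}\left(\int_0^{\sigma_n}c_s K_U(\sy^n_s)\left[1 + \norm{\sy^n_s}_H^4\right]ds\right)^{1/2}.
\end{equation}
The quartic term is treated by the splitting $\int_0^{\sigma_n}\norm{\sy^n_s}_H^4\,ds \leq \left(\sup_{r\in[0,\tau^{M,t}_n]}\norm{\sy^n_r}_H^2\right)\int_0^{\sigma_n}\norm{\sy^n_s}_H^2\,ds$, followed by Cauchy--Schwarz in $\Omega$, which combines the uniform bound $\mathbbm{E}\sup_{r}\norm{\sy^n_r}_H^2 \leq C$ with $\mathbbm{E}\int_0^{\sigma_n}\norm{\sy^n_s}_H^2\,ds \leq CS$ to give a contribution of order $\sqrt{S}$, again uniformly in $n$. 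Collecting the three pieces produces $\sup_n\mathbbm{E}[\norm{\sy^n}^2_{UH,\sigma_n} - \norm{\sy^n_0}_U^2] \leq C_M(S + \sqrt{S})$, which vanishes as $S \to 0$.

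The main obstacle is exactly the appearance of $\norm{\sy^n_s}_H$ in the drift and the quadratic variation: the first-hitting stopping time controls only $\sup\norm{\cdot}_U^2 + \int\norm{\cdot}_H^2$ and not $\norm{\cdot}_H$ at individual times, so a naive bound on the drift integral merely returns the $n$-independent constant $M + \norm{\sy^n_0}_U^2$ rather than a quantity shrinking with $S$. The decisive step is to trade these $H$-integrals for $S$ times the uniform pathwise estimate $\mathbbm{E}\sup_{r\leq\tau^{M,t}_n}\norm{\sy^n_r}_H^2$ from Proposition \ref{theorem:uniformbounds}; its uniformity in $n$, resting in turn on $\mathbbm{E}\norm{\sy^n_0}_H^2$ being bounded uniformly through $\norm{\mathcal{P}_n\sy_0}_H \le c\norm{\sy_0}_H$, is what renders the supremum over $n$ harmless throughout.
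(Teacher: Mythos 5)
Your proof is correct and follows essentially the same route as the paper: the statement itself is only cited there (from the earlier work with Crisan and Lang), but the paper's proof of its generalisation, Lemma \ref{probably unnecessary lemma}, uses exactly your scheme---the It\^o energy identity for $\norm{\sy^n_{\cdot}}_U^2$, removal of $\mathcal{P}_n$ by orthogonality in $U$, Assumption \ref{my2.4}, Burkholder--Davis--Gundy, and Cauchy--Schwarz against the uniform bound of Proposition \ref{theorem:uniformbounds}---and your argument is precisely its $\theta = 0$, deterministic-$\delta_j$ instance. The only divergence is your direct estimate $\mathbbm{E}\int_0^{\sigma_n}\norm{\sy^n_s}_H^2\,ds \leq S\,\mathbbm{E}\sup_{r \in [0,\tau^{M,t}_n]}\norm{\sy^n_r}_H^2 \leq CS$, a legitimate simplification available because $S$ is deterministic, whereas the paper, facing random $\delta_j$, must run a longer Cauchy--Schwarz chain through the pathwise bound $\int_0^{\tau^{M,t}_n}\norm{\sy^n_s}_H^2\,ds \leq c$ to arrive at $c\left[\mathbbm{E}(\delta_j^2)\right]^{\frac{1}{4}}$.
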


This allows us to apply the Glatt-Holtz and Ziane result, obtaining Theorem 3.26 of [\cite{goodair2023existence}], stated here.

\begin{proposition} \label{existence of limiting pair theorem}
There exists a stopping time $\tau^{M,t}_{\infty}$, a subsequence $(\sy^{n_l})$ and a process $\sy_\cdot= \sy_{\cdot \wedge \tau^{M,t}_
\infty}$ whereby $\sy_{\cdot}\mathbbm{1}_{\cdot \leq \tau^{M,t}_{\infty}}$ is progressively measurable in $V$ and such that:
\begin{itemize}
    \item $\mathbb{P}\left(\left\{ 0 < \tau^{M,t}_{\infty} \leq \tau^{M,t}_{n_l}\right)\right\} = 1$;
    %\item For $\mathbb{P}-a.e.$ $\omega$, $\sy_{\cdot}(\omega)\mathbbm{1}_{\cdot \leq \tau^{M,t}_{\infty}(\omega)} \in  L^2\left([0,T];V\right)$ for all $T>0$;
    \item For $\mathbb{P}-a.e.$ $\omega$, $\sy^{n_l}(\omega) \rightarrow \sy(\omega)$ in $ L^\infty\left([0,\tau^{M,t}_{\infty}(\omega)];U\right) \cap L^2\left([0,\tau^{M,t}_{\infty}(\omega)];H\right)$, i.e. 
\begin{equation} \label{as convergence} \norm{\sy^{n_l}(\omega) - \sy(\omega)}_{UH,\tau^{M,t}_{\infty}(\omega)}^2 \longrightarrow 0;\end{equation}
\end{itemize}
\end{proposition}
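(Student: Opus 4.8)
The plan is to obtain Proposition \ref{existence of limiting pair theorem} as a direct application of the abstract convergence principle of Glatt-Holtz and Ziane [\cite{glatt2009strong}] Lemma 5.1, whose two hypotheses are furnished precisely by the Cauchy estimate (\ref{cauchy result pt 2}) of Proposition \ref{theorem:cauchygalerkin} and the small-time equicontinuity bound (\ref{sufficient thing in probability condition}) of Proposition \ref{theorem: probability one}. I would first remark that these two statements are exactly the inputs demanded by that lemma, and then indicate how the almost-sure convergent subsequence and the positive limit stopping time are assembled from them.

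First I would upgrade the Cauchy-in-expectation control to almost-sure convergence along a subsequence. Since (\ref{cauchy result pt 2}) gives $\sup_{n \geq m}\mathbb{E}\norm{\sy^n - \sy^m}^2_{UH, \tau^{M,t}_m \wedge \tau^{M,t}_n} \to 0$, I can extract a fast-converging subsequence $(n_l)$ along which the successive differences decay geometrically, say
\begin{equation} \nonumber
\mathbb{E}\norm{\sy^{n_{l+1}} - \sy^{n_l}}^2_{UH, \tau^{M,t}_{n_l}\wedge\tau^{M,t}_{n_{l+1}}} \leq 2^{-l}.
\end{equation}
Chebyshev's inequality together with the Borel--Cantelli lemma then shows, for $\mathbb{P}$-a.e. $\omega$, that $\sum_l \norm{\sy^{n_{l+1}}(\omega) - \sy^{n_l}(\omega)}_{UH, \tau^{M,t}_{n_l}(\omega)\wedge\tau^{M,t}_{n_{l+1}}(\omega)} < \infty$, so that $(\sy^{n_l}(\omega))$ is Cauchy in the $UH$ norm on the associated random intervals. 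Completeness of the $UH$-norm space $L^\infty([0,S];U)\cap L^2([0,S];H)$ then provides a limit process $\sy(\omega)$.

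The crux of the argument, and the step I expect to be the main obstacle, is the construction of a single limit stopping time $\tau^{M,t}_\infty$ that is strictly positive yet dominated by every $\tau^{M,t}_{n_l}$, notwithstanding that the intervals on which the above convergence takes place are themselves $n$-dependent. Positivity is governed by the equicontinuity estimate (\ref{sufficient thing in probability condition}): by the definition (\ref{tauMtn}) of the hitting times, and the continuity in $s$ of the finite-dimensional Galerkin solutions, one has on $\{\tau^{M,t}_n \leq S\} \cap \{\tau^{M,t}_n < t\}$ the identity $\norm{\sy^n}^2_{UH,\tau^{M,t}_n \wedge S} - \norm{\sy^n_0}_U^2 = M$, whence
\begin{equation} \nonumber
M \sup_{n}\mathbb{P}\left(\tau^{M,t}_n \leq S,\ \tau^{M,t}_n < t\right) \leq \sup_{n}\mathbb{E}\left[\norm{\sy^{n}}^2_{UH,\tau^{M,t}_n \wedge S} - \norm{\sy^n_0}_{U}^2\right] \longrightarrow 0
\end{equation}
as $S \to 0$. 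This uniform non-degeneracy of the $\tau^{M,t}_n$ prevents the limiting hitting time from collapsing to zero. The domination $\tau^{M,t}_\infty \leq \tau^{M,t}_{n_l}$ is the more delicate ingredient: following the construction of Glatt-Holtz and Ziane one takes $\tau^{M,t}_\infty$ to be a first hitting time of the limit process $\sy$ in the $UH$ norm, and the uniform convergence $\sy^{n_l} \to \sy$ forces this time to lie below each threshold time $\tau^{M,t}_{n_l}$. Together these yield the first bullet point, and on the event $\{\cdot \leq \tau^{M,t}_\infty\}$ the almost-sure Cauchy property of the previous paragraph delivers (\ref{as convergence}), the second bullet point.

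It remains to record the regularity of the limit. Progressive measurability of $\sy_\cdot\mathbbm{1}_{\cdot \leq \tau^{M,t}_\infty}$ in $V$ follows because the uniform bound (\ref{firstresultofuniformbounds}) of Proposition \ref{theorem:uniformbounds}, restricted via $\tau^{M,t}_\infty \leq \tau^{M,t}_{n_l}$, keeps $(\sy^{n_l}\mathbbm{1}_{\cdot \leq \tau^{M,t}_\infty})$ bounded in the Hilbert space $L^2(\Omega\times[0,T];V)$; a further weak limit therefore exists in this reflexive space, is progressively measurable as a weak limit of progressively measurable processes, and is identified with $\sy$ through the $UH$-convergence just established. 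The notational convention $\sy_\cdot = \sy_{\cdot \wedge \tau^{M,t}_\infty}$ simply records that the limit is retained only up to $\tau^{M,t}_\infty$.
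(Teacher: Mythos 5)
Your proposal takes essentially the same route as the paper: the paper obtains this proposition precisely by applying the Glatt-Holtz--Ziane result ([\cite{glatt2009strong}] Lemma 5.1, as recorded in [\cite{goodair2023existence}] Theorem 3.26), with Proposition \ref{theorem:cauchygalerkin} supplying the Cauchy hypothesis and Proposition \ref{theorem: probability one} supplying the equicontinuity-at-zero hypothesis, exactly as you propose. Your supplementary details are also sound --- in particular the Markov/Borel--Cantelli upgrade to almost-sure convergence, the positivity of the limit time via the identity $\norm{\sy^n}^2_{UH,\tau^{M,t}_n \wedge S} - \norm{\sy^n_0}_U^2 = M$ on $\left\{\tau^{M,t}_n \leq S\right\} \cap \left\{\tau^{M,t}_n < t\right\}$, and the weak-compactness argument in $L^2\left(\Omega \times [0,T];V\right)$ for the progressive measurability in $V$ --- so they faithfully reproduce the cited construction rather than diverging from it.
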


From this point it is reasonably straightforwards to show that $(\sy, \tau^{M,t}_{\infty})$ is a local strong solution of (\ref{thespde}), as the uniform boundedness of Proposition \ref{theorem:uniformbounds} holds for the subsequence on $[0,\tau^{M,t}_{\infty}]$ allowing $\sy$ to inherit this regularity. The result is proven in Theorem 3.28, followed by the uniqueness, maximality and characterisation of the maximal time. Finally one can relieve the boundedness constraint on $\sy_0$ by partitioning $\Omega$ into sets on which an unbounded $\sy_0$ is bounded, using the unique maximal solution on each set, and piecing these together to obtain a solution for the unbounded $\sy_0$. This is done in Subsection 3.7.

\subsection{The Improved Method} \label{le improve}

Characterisation of the blow-up time in the previous subsection arises only through standard machinery on the energy norm of the solution, which is why the blow-up is given in the $HV$ norm. This machinery begins from the simple existence of \textit{a} local strong solution, making no use of the information that we have on $\tau^{M,t}_{\infty}$. It is clear, though, that $\tau^{M,t}_{\infty}$ is tightly connected with the $UH$ norm and the input parameters $M,t$. There is a strong intuition saying that for any first hitting time of $\sy$ in the $UH$ norm, we can choose $M$ and $t$ large enough so that $\tau^{M,t}_{\infty}$ exceeds it: such a property leads us to the fact that at the maximal time, which must be greater than all $\tau^{M,t}_{\infty}$, $\sy$ must blow-up in $UH$. Proposition \ref{amazing cauchy lemma} was developed to make this intuition rigorous. To apply it, we must upgrade the \textit{weak equicontinuity at time zero} from Proposition \ref{theorem: probability one} to a \textit{weak equicontinuity at all times}. 

\begin{lemma} \label{probably unnecessary lemma}
    Let $\theta$ be a stopping time and $(\delta_j)$ a sequence of stopping times which converge to $0$ $\mathbbm{P}-a.s.$. Then $$\lim_{j \rightarrow \infty}\sup_{n\in\N}\mathbbm{E}\left(\norm{\sy^n}_{UH,(\theta + \delta_j) \wedge \tau^{M,t}_n}^2 - \norm{\sy^n}_{UH,\theta \wedge \tau^{M,t}_n}^2 \right) =0.$$
\end{lemma}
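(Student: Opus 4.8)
The plan is to split the increment into a supremum part and a time-integral part, both of which are non-negative since $b := (\theta+\delta_j)\wedge\tau^{M,t}_n \geq \theta\wedge\tau^{M,t}_n =: a$, and to control each uniformly in $n$ via It\^o's formula started from the stopping time $a$. Writing out the two norms,
\[
\norm{\sy^n}^2_{UH,b} - \norm{\sy^n}^2_{UH,a} = \left(\sup_{r\leq b}\norm{\sy^n_r}_U^2 - \sup_{r\leq a}\norm{\sy^n_r}_U^2\right) + \int_a^b\norm{\sy^n_r}_H^2\,dr.
\]
Since $r\mapsto\norm{\sy^n_r}_U^2$ is continuous and non-negative, $\sup_{r\leq b} = \max\{\sup_{r\leq a},\sup_{a\leq r\leq b}\}$, so the first bracket is dominated by $\sup_{a\leq r\leq b}\left(\norm{\sy^n_r}_U^2 - \norm{\sy^n_a}_U^2\right)$. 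It thus suffices to bound the expectations of this quantity and of $\int_a^b\norm{\sy^n_r}_H^2\,dr$ from above, uniformly in $n$, by quantities that vanish as $j\to\infty$; here $a\leq b\leq\tau^{M,t}_n\leq t$ are all stopping times.

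For the supremum term I would apply It\^o's formula to $\norm{\sy^n_\cdot}_U^2$ (legitimate as $\sy^n$ is a continuous semimartingale in the finite-dimensional space $V_n$ and $\norm{\cdot}_U^2$ is a quadratic form there), obtaining for $r\in[a,b]$
\[
\norm{\sy^n_r}_U^2 - \norm{\sy^n_a}_U^2 = \int_a^r\left(2\inner{\mathcal{A}(s,\sy^n_s)}{\sy^n_s}_U + \sum_{i=1}^\infty\norm{\mathcal{P}_n\mathcal{G}_i(s,\sy^n_s)}_U^2\right)ds + \int_a^r 2\inner{\mathcal{P}_n\mathcal{G}(s,\sy^n_s)}{\sy^n_s}_U\,d\mathcal{W}_s,
\]
where I have used $\inner{\mathcal{P}_n\mathcal{A}(s,\sy^n_s)}{\sy^n_s}_U = \inner{\mathcal{A}(s,\sy^n_s)}{\sy^n_s}_U$ and $\inner{\mathcal{P}_n\mathcal{G}_i}{\sy^n_s}_U = \inner{\mathcal{G}_i}{\sy^n_s}_U$ since $\sy^n_s\in V_n$. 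Taking the supremum over $r\in[a,b]$, estimating the drift directly and the martingale via the Burkholder--Davis--Gundy inequality, Assumption \ref{my2.4} reduces everything to $\int_a^b c_s K_U(\sy^n_s)[1+\norm{\sy^n_s}_H^2]\,ds$ and $\big(\int_a^b c_s K_U(\sy^n_s)[1+\norm{\sy^n_s}_H^4]\,ds\big)^{1/2}$. Crucially, on $[0,\tau^{M,t}_n]$ the factor $K_U(\sy^n_s)$ is bounded by a constant $C=C(M,t)$, because $\norm{\sy^n_s}_U^2\leq M + \norm{\sy^n_0}_U^2$ by \eqref{tauMtn} and $\norm{\sy^n_0}_U\leq\norm{\sy_0}_U$ is controlled through $H\hookrightarrow U$ and $\sy_0\in L^\infty(\Omega;H)$; the same bound also handles the integral term.

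Two facts then close the argument. First, pointwise and uniformly in $n$ one has $b-a\leq\delta_j\wedge t$, whence for $q\in\{2,4\}$, $\int_a^b[1+\norm{\sy^n_s}_H^q]\,ds\leq(\delta_j\wedge t)\,Y_n^{q/2}$ with $Y_n := 1 + \sup_{r\leq\tau^{M,t}_n}\norm{\sy^n_r}_H^2$. Second, I would record the fourth-moment refinement $\sup_n\mathbbm{E}\sup_{r\leq\tau^{M,t}_n}\norm{\sy^n_r}_H^4\leq C$, obtained by running the very energy estimate behind Proposition \ref{theorem:uniformbounds} at the fourth moment (permissible since $\sy_0\in L^\infty(\Omega;H)$), so that $\sup_n\mathbbm{E}Y_n^2\leq C$. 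Cauchy--Schwarz and $(1+x^4)^{1/2}\leq 1+x^2$ then bound the drift and integral contributions by $C(\mathbbm{E}(\delta_j\wedge t)^2)^{1/2}(\mathbbm{E}Y_n^2)^{1/2}$ and the martingale contribution by $C(\mathbbm{E}(\delta_j\wedge t))^{1/2}(\mathbbm{E}Y_n^2)^{1/2}$. As $\delta_j\wedge t\to 0$ $\mathbbm{P}$-a.s. and is dominated by the constant $t$, dominated convergence gives $\mathbbm{E}(\delta_j\wedge t)^2\to 0$ and $\mathbbm{E}(\delta_j\wedge t)\to 0$, so all contributions vanish as $j\to\infty$ uniformly in $n$.

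The main obstacle is precisely this uniformity in $n$, and it is the sole reason the statement is more than a cosmetic variant of Proposition \ref{theorem: probability one}. There the increment sits on the \emph{deterministic} interval $[0,S]$, whose length factors out of every integral and leaves only the first moment $\mathbbm{E}\sup_{r\leq\tau^{M,t}_n}\norm{\sy^n_r}_H^2$ to be controlled. Here the increment lives on the \emph{random} interval $[\theta,\theta+\delta_j]$, and $\delta_j\wedge t$ is correlated with the path; a mere $L^1$-bound on $\sup_{r\leq\tau^{M,t}_n}\norm{\sy^n_r}_H^2$ is then genuinely insufficient, since its mass may concentrate on the shrinking random interval. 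The passage must therefore upgrade to uniform integrability of $\{Y_n\}$, which the higher-moment Galerkin estimate supplies, and this is the one place where care beyond the time-zero estimate is required.
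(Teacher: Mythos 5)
Your argument coincides with the paper's own proof for most of its length: the same decomposition of the $UH$ increment into a supremum part and a time-integral part, the same domination of the supremum difference by $\sup_{r\in[a,b]}\left(\norm{\sy^n_r}_U^2-\norm{\sy^n_a}_U^2\right)$ (the paper's claim (\ref{theclaim})), the same energy identity between the stopping times $a$ and $b$ with $\mathcal{P}_n$ absorbed by orthogonality in $U$, the same use of Assumption \ref{my2.4} and Burkholder--Davis--Gundy, and the same observation that $K_U(\sy^n_s)$ is bounded by a constant $C(M,t)$ on $[0,\tau^{M,t}_n]$ via (\ref{tauMtn}) and $\sy_0\in L^\infty(\Omega;H)$. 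The divergence is at the final estimation of the random-interval integrals. You bound $\int_a^b\left[1+\norm{\sy^n_s}_H^q\right]ds\leq(\delta_j\wedge t)Y_n^{q/2}$ and then invoke the fourth-moment refinement $\sup_n\mathbbm{E}\sup_{r\leq\tau^{M,t}_n}\norm{\sy^n_r}_H^4\leq C$. This estimate appears nowhere in the paper --- Proposition \ref{theorem:uniformbounds} is only a second-moment bound --- so your route requires rerunning the Galerkin energy estimates at fourth order. That is plausibly achievable under the stated assumptions (on the stopped interval the quartic and sextic $H$-growth in Assumption \ref{uniform assumpt} is tamed by a pathwise bounded Gr\"{o}nwall factor, and boundedness of $\sy_0$ supplies the initial moments), but as written it is asserted rather than proven, and it is the one load-bearing step of your argument that is not established.

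More importantly, your closing claim --- that a mere $L^1$ bound on $\sup_{r\leq\tau^{M,t}_n}\norm{\sy^n_r}_H^2$ is ``genuinely insufficient'' because mass may concentrate on the shrinking random interval --- is incorrect, and the paper's proof is the counterexample. You overlook that $\tau^{M,t}_n$ is a hitting time of the \emph{full} $UH$ norm, whose time-integral component yields the deterministic pathwise bound $\int_0^{\tau^{M,t}_n}\norm{\sy^n_s}_H^2\,ds\leq M+\norm{\sy^n_0}_U^2\leq c$. The paper exploits this through an iterated Cauchy--Schwarz: first $\mathbbm{E}\int_a^b\norm{\sy^n_s}_H^2\,ds\leq c\left[\mathbbm{E}\left(\int_a^b\norm{\sy^n_s}_H\,ds\right)^2\right]^{1/2}$ using only the second-moment bound; then pathwise $\left(\int_a^b\norm{\sy^n_s}_H\,ds\right)^2\leq\left(\int_a^b\left(1+\norm{\sy^n_s}_H^2\right)ds\right)\left(\int_a^b\norm{\sy^n_s}_H\,ds\right)\leq c\int_a^b\norm{\sy^n_s}_H\,ds$ by the hitting-time bound; and finally $\mathbbm{E}\int_a^b\norm{\sy^n_s}_H\,ds\leq c\left[\mathbbm{E}\,\delta_j^2\right]^{1/2}$, giving the rate $c\left[\mathbbm{E}\,\delta_j^2\right]^{1/4}$ with no moment beyond Proposition \ref{theorem:uniformbounds}. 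So your proof is correct conditional on the unproven fourth-moment estimate, but that upgrade is avoidable, and your diagnosis of where the difficulty lies misses the pathwise $L^2_tH$ control built into the definition of $\tau^{M,t}_n$. (A small point in your favour: replacing $\delta_j$ by $\delta_j\wedge t$ and using dominated convergence is cleaner than the paper's appeal to monotone convergence, which tacitly assumes $(\delta_j)$ is monotone.)
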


\begin{proof}
       We look at the energy identity satisfied by $\sy^n$ up until the stopping time $\theta \wedge \tau^{M,T}_n$ and then $(\theta + r)  \wedge \tau^{M,T}_n$ for some $r \geq 0$. We have that
        \begin{align*}
        \norm{\sy^n_{\theta \wedge \tau^{M,t}_n}}_U^2 = \norm{\sy^n_0}_U^2 &+ 2\int_0^{\theta \wedge \tau^{M,t}_n}\inner{\mathcal{P}_n\mathcal{A}\left(s,\sy^n_s\right)}{\sy^n_s}_U ds  +\int_0^{\theta \wedge \tau^{M,t}_n}\sum_{i=1}^\infty \norm{\mathcal{P}_n\mathcal{G}_i\left(s,\sy^n_s\right)}_U^2ds\\ &+ 2\int_0^{\theta \wedge \tau^{M,t}_n}\inner{\mathcal{P}_n\mathcal{G}\left(s,\sy^n_s\right)}{\sy^n_s}_U d\mathcal{W}_s 
    \end{align*}
    and similarly for $(\theta + r)\wedge \tau^{M,t}_n$, from which the difference of the equalities gives
    \begin{align*}
        &\norm{\sy^n_{(\theta + r)\wedge \tau^{M,t}_n}}_U^2 = \norm{\sy^n_{\theta \wedge \tau^{M,t}_n}}_U^2 + 2\int_{\theta \wedge \tau^{M,t}_n}^{(\theta + r)\wedge \tau^{M,t}_n}\inner{\mathcal{P}_n\mathcal{A}\left(s,\sy^n_s\right)}{\sy^n_s}_U ds\\ &  \qquad \qquad  +\int_{\theta \wedge \tau^{M,t}_n}^{(\theta + r)\wedge \tau^{M,t}_n}\sum_{i=1}^\infty \norm{\mathcal{P}_n\mathcal{G}_i\left(s,\sy^n_s\right)}_U^2ds + 2\int_{\theta \wedge \tau^{M,t}_n}^{(\theta + r)\wedge \tau^{M,t}_n}\inner{\mathcal{P}_n\mathcal{G}\left(s,\sy^n_s\right)}{\sy^n_s}_U d\mathcal{W}_s.
    \end{align*}
Using that $\mathcal{P}_n$ is an orthogonal projection in $U$, and invoking Assumption \ref{my2.4}, we reduce to
     \begin{align*}
        \norm{\sy^n_{(\theta + r)\wedge \tau^{M,t}_n}}_U^2 &- \norm{\sy^n_{\theta \wedge \tau^{M,t}_n}}_U^2\\ & \leq c\int_{\theta \wedge \tau^{M,t}_n}^{(\theta + r)\wedge \tau^{M,t}_n}1 +  \norm{\sy^n_s}_H^2  ds + \int_{\theta \wedge \tau^{M,t}_n}^{(\theta + r)\wedge \tau^{M,t}_n}\inner{\mathcal{G}\left(s,\sy^n_s\right)}{\sy^n_s}_U d\mathcal{W}_s. 
    \end{align*}
where the constant $c$ depends on $M$, through a bound on the $U$ norm by the stopping time $\tau^{M,t}_n$.  We now take the supremum over $r\in[0,\delta_j]$ and expectation, using the Burkholder-Davis-Gundy Inequality,
\begin{align*}
        &\mathbbm{E}\left[\sup_{r \in [0,\delta_j]}\norm{\sy^n_{(\theta + r)\wedge \tau^{M,t}_n}}_U^2 - \norm{\sy^n_{\theta \wedge \tau^{M,t}_n}}_U^2 + \int_{\theta \wedge \tau^{M,t}_n}^{(\theta + \delta_j)  \wedge \tau^{M,t}_n} \norm{\sy^n_s}_H^2 ds\right]\\ & \qquad \leq  c\mathbbm{E}\int_{\theta \wedge \tau^{M,t}_n}^{(\theta + \delta_j)\wedge \tau^{M,t}_n}1 + \norm{\sy^n_s}_H^2ds + c\mathbbm{E}\left(\int_{\theta \wedge \tau^{M,t}_n}^{(\theta + \delta_j)\wedge \tau^{M,t}_n}\sum_{i=1}^\infty\inner{\mathcal{G}_i\left(s,\sy^n_s\right)}{\sy^n_s}^2_U ds\right)^{\frac{1}{2}}. 
    \end{align*}
    having then added an $\mathbbm{E}\int_{\theta \wedge \tau^{M,t}_n}^{(\theta + \delta_j)  \wedge \tau^{M,t}_n} \norm{\sy^n_s}_H^2 ds$ to both sides. Using the second part of Assumption \ref{my2.4}, again controlling the $U$ norm by a constant, we achieve that
\begin{align}
        \nonumber &\mathbbm{E}\left[\sup_{r \in [0,\delta_j]}\norm{\sy^n_{(\theta + r)\wedge \tau^{M,t}_n}}_U^2 - \norm{\sy^n_{\theta \wedge \tau^{M,t}_n}}_U^2 + \int_{\theta \wedge \tau^{M,t}_n}^{(\theta + \delta_j)  \wedge \tau^{M,t}_n} \norm{\sy^n_s}_H^2 ds\right]\\ & \qquad \qquad  \leq  c\mathbbm{E}\int_{\theta \wedge \tau^{M,t}_n}^{(\theta + \delta_j)\wedge \tau^{M,t}_n}1 + \norm{\sy^n_s}_H^2ds + c\mathbbm{E}\left(\int_{\theta \wedge \tau^{M,t}_n}^{(\theta + \delta_j)\wedge \tau^{M,t}_n}1 + \norm{\sy^n_s}_H^4 ds\right)^{\frac{1}{2}}. \label{itsnumbered}
    \end{align}
Attentions turn to the last term, for which we use Cauchy-Schwarz and Proposition \ref{theorem:uniformbounds} to obtain that
\begin{align*}
    \mathbbm{E}\left(\int_{\theta \wedge \tau^{M,t}_n}^{(\theta + \delta_j)\wedge \tau^{M,t}_n}1 + \norm{\sy^n_s}_H^4 ds\right)^{\frac{1}{2}} &\leq \mathbbm{E}\left(\sup_{r \in [0,\tau^{M,t}_n]}\norm{\sy^n_s}_H^2\int_{\theta \wedge \tau^{M,t}_n}^{(\theta + \delta_j)\wedge \tau^{M,t}_n}1 + \norm{\sy^n_s}_H^2 ds\right)^{\frac{1}{2}}\\
    &\leq \left[\mathbbm{E}\left(\sup_{r \in [0,\tau^{M,t}_n]}\norm{\sy^n_s}_H^2\right)\right]^{\frac{1}{2}}\left[\mathbbm{E}\int_{\theta \wedge \tau^{M,t}_n}^{(\theta + \delta_j)\wedge \tau^{M,t}_n}1 + \norm{\sy^n_s}_H^2 ds\right]^{\frac{1}{2}}\\
    &\leq c\left[\mathbbm{E}\int_{\theta \wedge \tau^{M,t}_n}^{(\theta + \delta_j)\wedge \tau^{M,t}_n}1 + \norm{\sy^n_s}_H^2 ds\right]^{\frac{1}{2}}
\end{align*}
where $c$ is dependent on the boundedness of the initial condition $\sy_0$ from Proposition \ref{theorem:uniformbounds}. For both this control and the remaining term of (\ref{itsnumbered}), we show that 
\begin{align*}
    &\mathbbm{E}\int_{\theta \wedge \tau^{M,t}_n}^{(\theta + \delta_j)\wedge \tau^{M,t}_n} \norm{\sy^n_s}_H^2 ds\\ & \qquad \qquad \qquad \qquad \leq \mathbbm{E}\left(\sup_{r\in[0, \tau^{M,t}_n]}\norm{\sy^n_r}_H\int_{\theta \wedge \tau^{M,t}_n}^{(\theta + \delta_j)\wedge \tau^{M,t}_n} \norm{\sy^n_s}_H ds\right)\\
    &\qquad \qquad \qquad \qquad \leq \left[\mathbbm{E}\left(\sup_{r\in[0, \tau^{M,t}_n]}\norm{\sy^n_r}_H^2\right)\right]^{\frac{1}{2}}\left[\mathbbm{E}\left(\int_{\theta \wedge \tau^{M,t}_n}^{(\theta + \delta_j)\wedge \tau^{M,t}_n} \norm{\sy^n_s}_H ds\right)^2\right]^{\frac{1}{2}}\\
     &\qquad \qquad \qquad \qquad \leq c\left[\mathbbm{E}\left(\int_{\theta \wedge \tau^{M,t}_n}^{(\theta + \delta_j)\wedge \tau^{M,t}_n} \norm{\sy^n_s}_H ds\right)^2\right]^{\frac{1}{2}}\\
    &\qquad \qquad \qquad \qquad \leq c\left[\mathbbm{E}\left(\left(\int_{\theta \wedge \tau^{M,t}_n}^{(\theta + \delta_j)\wedge \tau^{M,t}_n} 1 + \norm{\sy^n_s}_H^2 ds\right)\left(\int_{\theta \wedge \tau^{M,t}_n}^{(\theta + \delta_j)\wedge \tau^{M,t}_n} \norm{\sy^n_s}_H ds\right)\right)\right]^{\frac{1}{2}}\\
    &\qquad \qquad \qquad \qquad \leq c\left[\mathbbm{E}\left(\int_{\theta \wedge \tau^{M,t}_n}^{(\theta + \delta_j)\wedge \tau^{M,t}_n} \norm{\sy^n_s}_H ds\right)\right]^{\frac{1}{2}}\\
    &\qquad \qquad \qquad \qquad \leq c\left[\left[\mathbbm{E}\left(\sup_{r\in[0, \tau^{M,t}_n]}\norm{\sy^n_r}_H^2\right)\right]^{\frac{1}{2}}\left[\mathbbm{E}\left(\int_{\theta \wedge \tau^{M,t}_n}^{(\theta + \delta_j)\wedge \tau^{M,t}_n}1 ds\right)^2\right]^{\frac{1}{2}}\right]^{\frac{1}{2}}\\
    &\qquad \qquad \qquad \qquad \leq c\left[\mathbbm{E}(\delta_j^2)\right]^{\frac{1}{4}}
\end{align*}
having utilised that $\int_0^{\tau^{M,t}_n}\norm{\sy^n_s}_H^2ds \leq c$ again from the definition of the first hitting time. Noting that $\delta_j$ is $\mathbbm{P}-a.s.$ monotone decreasing (as $j \rightarrow \infty$) and  convergent to $0$, the Monotone Convergence Theorem thus justifies that $$c\left[\mathbbm{E}(\delta_j^2)\right]^{\frac{1}{4}} = o_j$$
where $o_j$ represents a constant independent of $n$ which goes to zero as $\delta_j \rightarrow 0$. Revisiting (\ref{itsnumbered}), we have now justified that
\begin{equation} \label{a la combo} \mathbbm{E}\left[\sup_{r \in [0,\delta_j]}\norm{\sy^n_{(\theta + r)\wedge \tau^{M,t}_n}}_U^2 - \norm{\sy^n_{\theta \wedge \tau^{M,t}_n}}_U^2 + \int_{\theta \wedge \tau^{M,t}_n}^{(\theta + \delta_j)  \wedge \tau^{M,t}_n} \norm{\sy^n_s}_H^2 ds\right] \leq o_j.\end{equation}
It remains to relate the expression on the left hand side with what we are interested in, which is $\norm{\sy^n}_{UH,(\theta + \delta_j) \wedge \tau^{M,t}_n}^2 - \norm{\sy^n}_{UH,\theta \wedge \tau^{M,t}_n}^2$. We have that \begin{align*}\norm{\sy^n}_{UH,(\theta + \delta_j) \wedge \tau^{M,t}_n}^2 &- \norm{\sy^n}_{UH,\theta \wedge \tau^{M,t}_n}^2\\ &= \sup_{s \in [0,(\theta + \delta_j)\wedge \tau^{M,t}_n]}\norm{\sy^n_s}_U^2 -\sup_{s \in [0,\theta \wedge \tau^{M,t}_n]}\norm{\sy^n_s}_U^2 + \int_{\theta \wedge \tau^{M,t}_n}^{(\theta + \delta_j)  \wedge \tau^{M,t}_n} \norm{\sy^n_s}_H^2 ds\end{align*} and claim \begin{equation}\label{theclaim}\sup_{s \in [0,(\theta + \delta_j)\wedge \tau^{M,t}_n]}\norm{\sy^n_s}_U^2 -\sup_{s \in [0,\theta \wedge \tau^{M,t}_n]}\norm{\sy^n_s}_U^2 \leq \sup_{r \in [0,\delta_j]}\norm{\sy^n_{(\theta + r)\wedge \tau^{M,t}_n}}_U^2 - \norm{\sy^n_{\theta \wedge \tau^{M,t}_n}}_U^2.\end{equation}
Indeed, we have that $$ \sup_{s \in [0,(\theta + \delta_j)\wedge \tau^{M,t}_n]}\norm{\sy^n_s}_U^2 \leq \sup_{s \in [0,\theta \wedge \tau^{M,t}_n]}\norm{\sy^n_s}_U^2 + \sup_{s \in [\theta \wedge \tau^{M,t}_n,(\theta + \delta_j)\wedge \tau^{M,t}_n]}\norm{\sy^n_s}_U^2 - \norm{\sy^n_{\theta \wedge \tau^{M,t}_n}}_U^2$$ as the left hand side must equal either $\sup_{s \in [0,\theta \wedge \tau^{M,t}_n]}\norm{\sy^n_s}_U^2$ or $\sup_{s \in [\theta \wedge \tau^{M,t}_n,(\theta + \delta_j)\wedge \tau^{M,t}_n]}\norm{\sy^n_s}_U^2$, both of which are greater than the subtracted term $\norm{\sy^n_{\theta \wedge \tau^{M,t}_n}}_U^2$. Appreciating that $$\sup_{s \in [\theta \wedge \tau^{M,t}_n,(\theta + \delta_j)\wedge \tau^{M,t}_n]}\norm{\sy^n_s}_U^2 = \sup_{r \in [0,\delta_j]}\norm{\sy^n_{(\theta + r)\wedge \tau^{M,t}_n}}_U^2 $$ then yields the claim (\ref{theclaim}), which in combination with (\ref{a la combo}) grants that
\begin{align} \nonumber
        \mathbbm{E}\left[\norm{\sy^n}_{UH,(\theta + \delta_j) \wedge \tau^{M,t}_n}^2 - \norm{\sy^n}_{UH,\theta \wedge \tau^{M,t}_n}^2\right] \leq o_j.
    \end{align}
This proves the result.
   
\end{proof}

In combination with Proposition \ref{theorem:cauchygalerkin} we are entitled to apply Proposition \ref{amazing cauchy lemma} for the spaces $X_s := L^\infty\left([0,s];U\right) \cap L^2\left([0,s];H\right)$ with $\norm{\cdot}_{UH,s}$ norm. We obtain, therefore, that for any $t \geq 0$ and any given $R>0$ we can choose $M$ such that the $\tau^{M,t}_{\infty}$ of Proposition \ref{existence of limiting pair theorem}, for which $(\sy, \tau^{M,t}_{\infty})$ is a local strong solution, satisfies $\tau^{R,t} \leq \tau^{M,t}_{\infty}$ $\mathbbm{P}-a.s.$ with
$$\tau^{R,t}:= t \wedge \inf\left\{s \geq 0: \norm{\sy}_{UH,s}^2 \geq R \right\}.$$
We now inspect how this affects the maximal time $\Theta$, and argue that at $\mathbbm{P}-a.e.$ $\omega$ for which $\Theta(\omega)<\infty$, \begin{equation} \label{we show it} \norm{\sy(\omega)}_{UH,\Theta(\omega)}^2 := \sup_{r \in [0,\Theta(\omega))}\norm{\sy_r(\omega)}_U^2 + \int_0^{\Theta(\omega)}\norm{\sy_r(\omega)}_H^2dr = \infty.\end{equation}
The maximality of $\Theta$ ensures that for every $R$ and $t$, $\tau^{R,t} \leq \Theta$ $\mathbbm{P}-a.s.$ as it must exceed any stopping time which is the lifetime of a local strong (see for example, [\cite{goodair2023existence}] Corollary 3.35). Suppose for a contradiction that there exists a set of positive probability on which $\Theta < \infty$ and $\norm{\sy}_{UH,\Theta}^2 < \infty$. Classically there must exist a set of positive probability $\mathscr{A}$ and values $t,R$ such that for all $\omega \in \mathscr{A}$, $\Theta < t$ and $\norm{\sy}_{UH,\Theta}^2 < R$. This implies that for $\omega \in \mathscr{A}$, $\Theta(\omega) < \tau^{R,t}(\omega)$ which provides the contradiction. The property (\ref{we show it}) is thus proven.\\

This proves the first assertion of Theorem \ref{theorem1} in the case that $\sy_0 \in L^\infty\left(\Omega;H\right)$. Extension to the unbounded case is exactly as in Subsection \ref{le synops}, c.f. [\cite{goodair2023existence}] Subsection 3.7, which preserves this blow-up at the maximal time. To completely prove Theorem \ref{theorem1} we now only need to justify the second assertion. To this end we take a positive stopping time $\tau$ such that $\norm{\sy}_{UH,\tau}^2 < \infty$ $\mathbbm{P}-a.s.$. We must show that 
$\sy_{\cdot \wedge \tau} \in C\left([0,T];H\right)$ and $\sy_{\cdot}\mathbbm{1}_{\cdot \leq \tau} \in L^2\left([0,T];V\right)$ for all $T \geq 0$ $\mathbbm{P}-a.s.$, that $\sy_{\cdot \wedge \tau}\mathbbm{1}_{\cdot \leq \tau}$ progressively measurable in $V$, and that the identity
\begin{equation} \nonumber
    \sy_{t \wedge \tau} = \sy_0 + \int_0^{t\wedge \tau} \mathcal{A}(s,\sy_s)ds + \int_0^{t \wedge \tau}\mathcal{G} (s,\sy_s) d\mathcal{W}_s
\end{equation}
holds $\mathbbm{P}-a.s.$ in $U$ for all $t \geq 0$. By definition of $\Theta$ there exists a sequence of stopping times $(\theta_j)$ $\mathbbm{P}-a.s.$ monotone increasing and convergent to $\Theta$, whereby $(\sy_{\cdot \wedge \theta_j},\theta_j)$ is a local strong solution of the equation (\ref{thespde}) for each $j$. We consider the possibility that $\tau$ is infinite on some measurable subset $\mathscr{B} \subset \Omega$, implying that $\Theta = \infty$ on $\mathscr{B}$ by the blow-up, and also note that on $\mathscr{B}^C$ where $\tau < \infty$ then again due to blow-up we must have that $\tau < \Theta$. In any case, $(\theta_j \wedge \tau)$ is monotone convergent to $\tau$ $\mathbbm{P}-a.s.$, and $(\sy_{\cdot \wedge \theta_j \wedge \tau}, \theta_j \wedge \tau)$ is a local strong solution of the equation (\ref{thespde}). For the progressive measurability, for each fixed $T > 0$ we understand $\sy_{\cdot \wedge \tau}\mathbbm{1}_{\cdot \leq \tau}$ as the $\mathbbm{P} \times \lambda-a.e.$ limit of the $\mathcal{F}_T \times \mathcal{B}([0,T])-$measurable $(\sy_{\cdot \wedge \theta_j \wedge \tau}\mathbbm{1}_{\cdot \leq \theta_j \wedge \tau})$ on $\Omega \times [0,T]$. Such a limit preserves the measurability on the product sigma algebra, justifying the required progressive measurability. The remaining properties are pathwise hence even clearer, as for $\mathbbm{P}-a.e.$ $\omega$ in $\mathscr{B}$ and for any given $T$ there exists a $j$ such that $\theta_j(\omega) > T$ and $\sy_{\cdot \wedge \theta_j}$ has the required regularity. Similarly on $\mathscr{B}^C$ for $\mathbbm{P}-a.e.$ $\omega$ there exists a $j$ such that $\tau(\omega) < \theta_j(\omega)$, so $\sy_{\cdot \wedge \tau(\omega)}(\omega) = \sy_{\cdot \wedge \theta_j(\omega) \wedge \tau(\omega)}(\omega)$ which has the necessary properties as $\sy_{\cdot \wedge \theta_j \wedge \tau}$ is a local strong solution. This concludes the proof.

\section{Application: High Order Regularity for Stochastic Navier-Stokes} \label{sect applications}

As an application of this improved blow-up criterion, we demonstrate high order regularity of a Stochastic Navier-Stokes Equation
\begin{equation} \label{number2equationSALT}
    u_t = u_0 - \int_0^t\mathcal{L}_{u_s}u_s\,ds + \nu\int_0^t \Delta u_s\, ds + \int_0^t \mathcal{G}(u_s) d\mathcal{W}_s - \nabla \rho_t
\end{equation}
where $u$ represents the fluid velocity, $\nu > 0$ the viscosity, $\rho$ the pressure and $\mathcal{L}$ the nonlinear term defined by $\mathcal{L}_fg = \sum_{j=1}^Nf^j\partial_jg$ with Laplacian $\Delta f = \sum_{j=1}^N \partial_j^2 f$. We pose the equation over the torus $\T$ in $N=2$ or $3$ dimensions. On the noise $\mathcal{G}$ we assume that for each $i, j \in \N$, there exists constants $c_{i,j}$ such that $\mathcal{G}_i$ is $c_{i,j}-$Lipschitz on $W^{k,2}(\T;\R^N)$ and $\sum_{i=1}^\infty c_{i,j}^2 < \infty$. Of course more exciting noise structures could be considered in this framework, but we choose the Lipschitz case for a simple demonstration. We require the divergence-free property of solutions, which is to say that $\sum_{j=1}^N\partial_ju^j = 0$. To facilitate the analysis, we introduce some additional function spaces. Recall that any function $f \in L^2(\T;\R^N)$ admits the representation \begin{equation} \label{fourier rep}f(x) = \sum_{k \in \mathbb{Z}^N}f_ke^{ik\cdot x}\end{equation} whereby each $f_k \in \mathbb{C}^N$ is such that $f_k = \overbar{f_{-k}}$ and the infinite sum is defined as a limit in $L^2(\T;\R^N)$, see e.g. [\cite{robinson2016three}] Subsection 1.5 for details.

\begin{definition}
We define $L^2_{\sigma}$ as the subset of $L^2(\T;\R^N)$ of zero-mean functions $f$ whereby for all $k \in \mathbbm{Z}^N$, $k \cdot f_k = 0$ with $f_k$ as in (\ref{fourier rep}). For general $m \in \N$ we introduce $W^{m,2}_{\sigma}$ as the intersection of $W^{m,2}(\T;\R^N)$ respectively with $L^2_{\sigma}$.
\end{definition}

Note that the dimensionality $N$ is not explicitly included in the spaces, but will be made clear from context. We define the Leray Projector $\mathcal{P}$ as the orthogonal projection in $L^2(\T;\R^N)$ onto $L^2_\sigma$. For $m \in \N$ the inner product $\inner{f}{g}_m:= \inner{(-\mathcal{P}\Delta)^{m/2}f}{(-\mathcal{P}\Delta)^{m/2}g}$ is equivalent to the usual $W^{m,2}(\T;\R^N)$ inner product on $W^{m,2}_{\sigma}$ and we consider $W^{m,2}_{\sigma}$ as a Hilbert Space equipped with this inner product. Further details can be found in [\cite{robinson2016three}] Exercises 2.12, 2.13 and the discussion in Subsection 2.3. Following the typical study of incompressible Navier-Stokes, we work with the projected equation 
\begin{equation} \label{projected Ito}
    u_t = u_0 - \int_0^t\mathcal{P}\mathcal{L}_{u_s}u_s\ ds + \nu\int_0^t \mathcal{P} \Delta u_s\, ds  + \int_0^t \mathcal{P}\mathcal{G}(u_s) d\mathcal{W}_s 
\end{equation}
which is now in the form of (\ref{thespde}). The existence of a unique local strong solution to (\ref{projected Ito}) in 3D, and a unique global strong solution in 2D, is by this point standard: see for instance, [\cite{glatt2009strong}, \cite{goodair2024weak}, \cite{goodair20233d}]. We state the result here.

\begin{proposition} \label{navier strong existence}
     Let $u_0: \Omega \rightarrow W^{1,2}_{\sigma}$ be $\mathcal{F}_0-$measurable. Then there exists a pair $(u,\tau)$ where $\tau$ is a $\mathbbm{P}-a.s.$ positive stopping time and $u$ is a process such that for $\mathbbm{P}-a.e.$ $\omega$, $u_{\cdot}(\omega) \in C\left([0,T];W^{1,2}_{\sigma}\right)$ and $u_{\cdot}(\omega)\mathbbm{1}_{\cdot \leq \tau(\omega)} \in L^2\left([0,T];W^{2,2}_{\sigma}\right)$ for all $T \geq 0$ and with $u_{\cdot}\mathbbm{1}_{\cdot \leq \tau}$ progressively measurable in $W^{2,2}_{\sigma}$, satisfying
     \begin{equation} \nonumber
    u_t = u_0 - \int_0^{t\wedge \tau}\mathcal{P}\mathcal{L}_{u_s}u_s\ ds + \nu\int_0^{t \wedge \tau} \mathcal{P} \Delta u_s\, ds  + \int_0^{t \wedge \tau} \mathcal{P}\mathcal{G}(u_s) d\mathcal{W}_s 
\end{equation}
$\mathbbm{P}-a.s.$ in $L^2_{\sigma}$ for all $t \geq 0$. Moreover if $(v, \gamma)$ was any other such local strong solution then  \begin{equation} \nonumber\mathbbm{P}\left(\left\{\omega \in \Omega: u_{t}(\omega) =  v_{t}(\omega)  \quad \forall t \in [0,\tau \wedge \gamma] \right\} \right) = 1. \end{equation}
If $N=2$ then for any given $T > 0$ one can choose $\tau := T$.
\end{proposition}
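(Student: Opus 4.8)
The plan is to obtain Proposition \ref{navier strong existence} by the classical stochastic Galerkin scheme at the enstrophy ($W^{1,2}_\sigma$) level, rather than as a corollary of Theorem \ref{theorem1}. It is worth noting at the outset that Theorem \ref{theorem1} does \emph{not} apply directly here: for the triple $W^{2,2}_\sigma \hookrightarrow W^{1,2}_\sigma \hookrightarrow L^2_\sigma$ the viscous energy balance fails to close the coercivity Assumption \ref{uniform assumpt} in three dimensions, which is precisely the analytic reflection of the fact that only a local solution can be expected there. I would therefore begin by projecting (\ref{projected Ito}) onto the span $V_n$ of the first $n$ eigenfunctions of the Stokes operator $-\mathcal{P}\Delta$, obtaining a finite-dimensional It\^o SDE whose coefficients are locally Lipschitz in $W^{1,2}_\sigma$ (the drift through $\mathcal{P}\mathcal{L}$ and $\mathcal{P}\Delta$, the diffusion through the summability hypothesis $\sum_i c_{i,1}^2 < \infty$ on $\mathcal{G}$). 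This finite system admits a unique strong solution $u^n$ up to a possible explosion time.

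The heart of the matter is a uniform-in-$n$ enstrophy estimate. Applying It\^o's formula to $\norm{u^n}_{W^{1,2}_\sigma}^2$, the viscous term yields $-2\nu\norm{u^n}_{W^{2,2}_\sigma}^2$, the It\^o correction is controlled via $\sum_i c_{i,1}^2$, and the decisive contribution is the nonlinear enstrophy production, which upon integration by parts has the form $\langle \mathcal{L}_{u^n}u^n, \Delta u^n\rangle_{L^2}$ modulo the Leray projection. In two dimensions the Ladyzhenskaya inequality $\norm{f}_{L^4}^2 \lesssim \norm{f}_{L^2}\norm{\nabla f}_{L^2}$ bounds this by $\tfrac{\nu}{2}\norm{u^n}_{W^{2,2}_\sigma}^2 + C\norm{u^n}_{L^2}^2\norm{u^n}_{W^{1,2}_\sigma}^4$; combining with the global $L^2_\sigma$ energy estimate and a stochastic Gronwall argument then secures $\sup_n \mathbb{E}\big[\sup_{t\le T}\norm{u^n_t}_{W^{1,2}_\sigma}^2 + \nu\int_0^T \norm{u^n_s}_{W^{2,2}_\sigma}^2\,ds\big] < \infty$ for every $T$. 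In three dimensions the best available splitting produces $\tfrac{\nu}{2}\norm{u^n}_{W^{2,2}_\sigma}^2 + C\norm{u^n}_{W^{1,2}_\sigma}^6$, whose supercritical sixth power forces the estimate to be run only up to stopping times $\tau^{K}_n := \inf\{t : \norm{u^n_t}_{W^{1,2}_\sigma}^2 \ge K\}$, delivering uniform bounds on $[0,\tau^K_n]$ and a strictly positive common existence time.

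With these bounds in hand I would pass to the limit by stochastic compactness: the uniform moments together with a fractional-in-time H\"older estimate (from the integral form and Burkholder--Davis--Gundy) yield tightness of the laws of $(u^n)$ in $L^2([0,T];W^{1,2}_\sigma)$ through Aubin--Lions--Simon, after which Prokhorov and Skorokhod representation furnish an almost surely convergent subsequence on an auxiliary space whose limit solves the martingale formulation of (\ref{projected Ito}). Pathwise uniqueness is then obtained at the $L^2_\sigma$ level: for the difference $w$ of two solutions, It\^o's formula for $\norm{w}_{L^2}^2$ and the cancellation $\langle v\cdot\nabla w, w\rangle = 0$ reduce the nonlinearity to $\langle w\cdot\nabla u, w\rangle$, which after a Ladyzhenskaya--Young splitting is absorbed into $\tfrac{\nu}{2}\norm{\nabla w}_{L^2}^2$ plus a Gronwall factor integrable on the common lifetime, giving the stated a.s.\ agreement on $[0,\tau\wedge\gamma]$. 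The Gy\"ongy--Krylov characterisation then upgrades the martingale solution to a probabilistically strong one with the regularity $C([0,T];W^{1,2}_\sigma)\cap L^2([0,T];W^{2,2}_\sigma)$, while in two dimensions the global enstrophy bound passes to the limit by Fatou, so the $W^{1,2}_\sigma$ norm never blows up and one may take $\tau := T$. The principal obstacle throughout is the three-dimensional enstrophy production term: its supercriticality confines the argument to stopping-time localisation and precludes a global statement, whereas everything else is the routine stochastic Galerkin machinery of the cited references.
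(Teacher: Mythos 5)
Your proposal takes a genuinely different route, and in fact the paper itself offers no proof of Proposition \ref{navier strong existence}: it is stated as standard and attributed to \cite{glatt2009strong}, \cite{goodair2024weak}, \cite{goodair20233d}, where the argument is the fixed-probability-space Galerkin--Cauchy scheme that Section \ref{proof sect} abstracts — uniform bounds up to first hitting times in the lower norm, a pairwise Cauchy estimate (as in Proposition \ref{theorem:cauchygalerkin}), and a Glatt-Holtz--Ziane-type lemma (the ancestor of Proposition \ref{amazing cauchy lemma}) producing a limit process together with a positive stopping time, with no compactness and no change of probability space. Your alternative — Aubin--Lions--Simon tightness, Prokhorov/Skorokhod, pathwise uniqueness and Gy\"ongy--Krylov — is the other classical path and is sound in outline; it buys familiarity and avoids the abstract Cauchy lemma, but at the cost of exactly the point you assert rather than prove: in 3D the objects to be made tight are the stopped processes $u^n_{\cdot\wedge\tau^K_n}$, and first hitting times are neither continuous functionals of the path nor automatically stopping times after Skorokhod representation, so the ``strictly positive common existence time'' and the identification of the limit time require a dedicated argument (for instance uniform smallness of $\mathbb{P}(\tau^K_n<\delta)$, which is precisely the weak equicontinuity at time zero of Proposition \ref{theorem: probability one}). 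This delicate step is the raison d'\^etre of the Cauchy-lemma route used in the cited proofs.

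Two smaller corrections. First, your diagnosis of why Theorem \ref{theorem1} is inapplicable is not the paper's and is incomplete: the paper's closing remark in Section \ref{sect applications} is that the assumptions fail for $V:=W^{2,2}_{\sigma}$, $H:=W^{1,2}_{\sigma}$, $U:=L^2_{\sigma}$ because the algebra property of $H$ is lost. This already breaks the growth and Lipschitz bounds on $\mathcal{P}\mathcal{L}_uu$ in the first and last assumptions of Subsection \ref{assumptionschapter} in \emph{both} dimensions, since for example $\norm{\mathcal{L}_\phi\phi}_{L^2}\lesssim\norm{\phi}_{W^{1,2}}\norm{\phi}_{W^{2,2}}$ cannot be dominated by $K_U(\phi)\left[1+\norm{\phi}_V^2\right]^{1/2}$ when $K_U$ depends only on the $L^2$ norm; so even the 2D global statement cannot be routed through Theorem \ref{theorem1} with this triple, not merely the 3D coercivity in Assumption \ref{uniform assumpt}. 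Second, $u_0$ is assumed only $\mathcal{F}_0$-measurable, with no moments, so your stochastic Gronwall and Burkholder--Davis--Gundy estimates must be run after truncating the initial condition (partitioning $\Omega$ into sets where $\norm{u_0}_{W^{1,2}_{\sigma}}$ is bounded and patching the solutions together), as recalled at the end of Subsection \ref{le synops}. With these repairs your sketch matches the standard proofs in the references the paper cites.
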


The problem that we consider is, if $u_0: \Omega \rightarrow W^{k,2}_{\sigma}$ is $\mathcal{F}_0-$measurable for some $k \in \N$, then does $u$ belong pathwise to $C\left([0,\tau];W^{k,2}_{\sigma}\right) \cap L^2\left([0,\tau];W^{k+1,2}_{\sigma}\right)$? The result is affirmative and proven through iterated applications of Theorem \ref{theorem1}. In fact the framework and assumptions of Subsections \ref{sub functional for local} and \ref{assumptionschapter} were completely verified for the spaces \begin{equation}\label{copmpletely verified}V:= W^{3,2}_{\sigma}, \qquad H:= W^{2,2}_{\sigma}, \qquad U:= W^{1,2}_{\sigma}\end{equation}
under a transport noise in [\cite{goodair20233d}] Section 3, whilst much more easily holding for the Lipschitz noise.  The maximal solution that we obtain must agree with $u$ on its lifetime of existence by uniqueness. From Proposition \ref{navier strong existence} it is certainly true that $\sup_{r \in [0,\tau)}\norm{u_r}_{W^{1,2}_{\sigma}}^2 + \int_0^{\tau}\norm{u_r}_{W^{2,2}_{\sigma}}^2dr < \infty $ $\mathbbm{P}-a.s.$, so from Theorem \ref{theorem1} with the spaces established in (\ref{copmpletely verified}) we verify that for $\mathbbm{P}-a.e.$ $\omega$, $u_{\cdot}(\omega) \in C\left([0,T];W^{2,2}_{\sigma}\right)$\footnote{Recall that $u_{\cdot} = u_{\cdot \wedge \tau}$.} and $u_{\cdot}(\omega)\mathbbm{1}_{\cdot \leq \tau(\omega)} \in L^2\left([0,T];W^{3,2}_{\sigma}\right)$ for all $T \geq 0$. In particular, $\sup_{r \in [0,\tau)}\norm{u_r}_{W^{2,2}_{\sigma}}^2 + \int_0^{\tau}\norm{u_r}_{W^{3,2}_{\sigma}}^2dr < \infty $. The inductive method is now apparent, where we consider spaces 
\begin{equation}\nonumber V:= W^{j+1,2}_{\sigma}, \qquad H:= W^{j,2}_{\sigma}, \qquad U:= W^{j-1,2}_{\sigma}.\end{equation}
For the Lipschitz noise a verification of the assumptions in these higher spaces provides little additional difficulty to the case of (\ref{copmpletely verified}), so we omit the complete details here and content ourselves with applying Theorem \ref{theorem1} in any such case. Repeating this procedure for $j=3, 4, \dots, k$, we show the following.

%By taking $\tau$ in Theorem \ref{theorem1} to simply be $T$, then we have established that \textit{if} $u$ belongs pathwise to $C\left([0,T];W^{k-1,2}_{\sigma}\right) \cap L^2\left([0,T];W^{k,2}_{\sigma}\right)$, which of course implies that $\sup_{r \in [0,T)}\norm{u_r}_{W^{k-1,2}_{\sigma}}^2 + \int_0^{T}\norm{u_r}_{W^{k,2}_{\sigma}}^2dr < \infty $ $\mathbbm{P}-a.s.$, then $u$ belongs pathwise to $C\left([0,T];W^{k,2}_{\sigma}\right) \cap L^2\left([0,T];W^{k+1,2}_{\sigma}\right)$. But we can simply employ the same arguments in one space below, for $$V:= W^{k,2}_{\sigma}, \qquad H:= W^{k-1,2}_{\sigma}, \qquad U:= W^{k-2,2}_{\sigma}$$ to see that if $u$ belongs pathwise to $C\left([0,T];W^{k-2,2}_{\sigma}\right) \cap L^2\left([0,T];W^{k-1,2}_{\sigma}\right)$ then indeed it does to $C\left([0,T];W^{k-1,2}_{\sigma}\right) \cap L^2\left([0,T];W^{k,2}_{\sigma}\right)$ and as above to $C\left([0,T];W^{k,2}_{\sigma}\right) \cap L^2\left([0,T];W^{k+1,2}_{\sigma}\right)$. Iterating this procedure down to the base case of (\ref{copmpletely verified}), we then only need the regularity given to us in Proposition \ref{navier strong existence}. We have shown the following.

\begin{theorem}
    For any given $k \in \N$ let $u_0: \Omega \rightarrow W^{k,2}_{\sigma}$ be $\mathcal{F}_0-$measurable. Then any local strong solution $(u,\tau)$ of (\ref{projected Ito}) as specified in Proposition \ref{navier strong existence} is such that for $\mathbb{P}-a.e.$ $\omega$, $u_{\cdot}(\omega) \in C\left([0,T];W^{k,2}_{\sigma}\right)$ and $u_{\cdot}(\omega)\mathbbm{1}_{\cdot \leq \tau(\omega)} \in L^2\left([0,T];W^{k+1,2}_{\sigma}\right)$ for all $T \geq 0$. If $N = 2$ then for $\mathbb{P}-a.e.$ $\omega$, $u_{\cdot}(\omega) \in C\left([0,T];W^{k,2}_{\sigma}\right)\cap L^2\left([0,T];W^{k+1,2}_{\sigma}\right)$ for all $T \geq 0$.
\end{theorem}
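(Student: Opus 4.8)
The plan is to run an induction on the regularity index $j$, at each stage applying the second assertion of Theorem \ref{theorem1} to the variational triple
$$U = W^{j-1,2}_{\sigma}, \quad H = W^{j,2}_{\sigma}, \quad V = W^{j+1,2}_{\sigma},$$
so that finite control in the lower pair $(U,H)$ is promoted to membership of $C\left([0,T];H\right)\cap L^2\left([0,T];V\right)$ up to $\tau$. The base of the induction is Proposition \ref{navier strong existence}, which already supplies $\sup_{r\in[0,\tau)}\norm{u_r}_{W^{1,2}_{\sigma}}^2 + \int_0^\tau \norm{u_r}_{W^{2,2}_{\sigma}}^2\,dr<\infty$ $\mathbbm{P}$-a.s.; this is precisely the finiteness hypothesis of Theorem \ref{theorem1} for the triple at $j=2$. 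Throughout, $u_0\in W^{k,2}_{\sigma}$ guarantees $u_0\in W^{j,2}_{\sigma}=H$ for every $j\le k$, so the initial datum is admissible at each level.

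First I would fix $j\in\{2,\dots,k\}$ and verify that the framework of Subsections \ref{sub functional for local} and \ref{assumptionschapter} holds for the triple above. I would take $(a_n)$ to be the eigenfunctions of the Stokes operator $-\mathcal{P}\Delta$: being simultaneously orthogonal in every $W^{m,2}_{\sigma}$, they yield the uniform boundedness of the projections $\mathcal{P}_n$ in $H$ and, through the divergence of the eigenvalues, the spectral estimate $\norm{(I-\mathcal{P}_n)f}_U\le \mu_n^{-1}\norm{f}_H$ with $\mu_n\to\infty$. The structural Assumptions are then checked for $\mathcal{A}(t,\phi)=\nu\mathcal{P}\Delta\phi-\mathcal{P}\mathcal{L}_\phi\phi$ and the Lipschitz $\mathcal{G}$, following the $j=2$ computation of [\cite{goodair20233d}] Section 3; the noise contributes only routine Lipschitz bounds, while the genuine work is in the convective term $\mathcal{P}\mathcal{L}_u u$.

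With the assumptions in hand, Theorem \ref{theorem1} furnishes a unique maximal solution with lifetime $\Theta$ at level $j$ for the datum $u_0$; pathwise uniqueness (Proposition \ref{navier strong existence}, applicable since the identity holds in $U=W^{j-1,2}_{\sigma}\hookrightarrow L^2_{\sigma}$) forces this solution to agree with $u$ wherever both exist. The inductive hypothesis $\sup_{r\in[0,\tau)}\norm{u_r}_{W^{j-1,2}_{\sigma}}^2 + \int_0^\tau \norm{u_r}_{W^{j,2}_{\sigma}}^2\,dr<\infty$ $\mathbbm{P}$-a.s., compared against the blow-up characterisation of Theorem \ref{theorem1} in the $U,H$ norms, rules out $\Theta\le\tau$ on any set of positive probability, so the two solutions coincide on all of $[0,\tau]$ and the finiteness condition transfers to the maximal solution up to $\tau$. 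The second assertion of Theorem \ref{theorem1} then gives that $(u_{\cdot\wedge\tau},\tau)$ is a local strong solution at level $j$, whence by Definition \ref{v valued local def} we obtain $u\in C\left([0,T];W^{j,2}_{\sigma}\right)$ and $u\mathbbm{1}_{\cdot\le\tau}\in L^2\left([0,T];W^{j+1,2}_{\sigma}\right)$ for all $T\ge 0$, $\mathbbm{P}$-a.s. In particular this yields $\sup_{r\in[0,\tau)}\norm{u_r}_{W^{j,2}_{\sigma}}^2 + \int_0^\tau \norm{u_r}_{W^{j+1,2}_{\sigma}}^2\,dr<\infty$, the hypothesis needed to run step $j+1$. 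Iterating from $j=2$ up to $j=k$ yields the claimed regularity, and for the $2$D statement I recall that Proposition \ref{navier strong existence} permits the choice $\tau=T$ for arbitrary $T$, so the $L^2\left([0,T];W^{k+1,2}_{\sigma}\right)$ integrability holds on the whole interval without the localising indicator.

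The main obstacle I anticipate is the verification of the coercivity-type estimates of Assumptions \ref{uniform assumpt} and \ref{my2.4} for $\mathcal{P}\mathcal{L}_u u$ at the level of $W^{j,2}_{\sigma}$ and $W^{j-1,2}_{\sigma}$: one must differentiate the nonlinearity $j-1$ times, control the resulting products by Sobolev embedding and interpolation, and exploit the cancellation of the top-order transport term against the test function so as to absorb the would-be loss of a derivative into the viscous term $-\gamma\norm{\cdot}_V^2$. It is exactly here that the restriction $N\le 3$ and the merely local, stopping-time nature of the estimate in three dimensions enter; in two dimensions the sharper embeddings remove the local-in-time restriction, which underlies the global $2$D conclusion.
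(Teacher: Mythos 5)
Your proposal is correct and follows essentially the same route as the paper: an iterated application of the second assertion of Theorem \ref{theorem1} over the shifted triples $U = W^{j-1,2}_{\sigma}$, $H = W^{j,2}_{\sigma}$, $V = W^{j+1,2}_{\sigma}$ for $j = 2, \dots, k$, with Proposition \ref{navier strong existence} supplying the base finiteness, uniqueness identifying $u$ with the maximal solution at each level, and the choice $\tau = T$ handling the $2$D claim. Your additional details (Stokes eigenfunctions for the Galerkin system, and the explicit blow-up argument ruling out $\Theta \leq \tau$) merely spell out steps the paper delegates to [\cite{goodair20233d}] Section 3 and to uniqueness, so there is no substantive difference.
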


It should be noted that the assumptions cannot be verified for the spaces $V:= W^{2,2}_{\sigma}$, $H:= W^{1,2}_{\sigma}$, $U:= L^2_{\sigma}$ as the algebra property for $H$ is lost. Furthermore we do not obtain strong solutions in 3D on the lifespan of weak solutions.

%%%%%%%%%%%%%%%%%%%%%%%%%%%%%%%%%%%%%%%%%%%%%%%%%%%%%%%%%%%%%%%%%%%
%%                                                               %%
%% Supplementary Material, if any, should be provided in         %%
%% {supplement} environment  with title and short description.   %%
%%                                                               %%
%%%%%%%%%%%%%%%%%%%%%%%%%%%%%%%%%%%%%%%%%%%%%%%%%%%%%%%%%%%%%%%%%%%

\section{Appendix}
%\stitle{A Characterisation of the Limit Stopping Time}

\begin{proposition} \label{amazing cauchy lemma}
    Fix $T>0$. For $t\in[0,T]$ let $X_t$ denote a Banach Space with norm $\norm{\cdot}_{X,t}$ such that for all $s > t$, $X_s \xhookrightarrow{}X_t$ and $\norm{\cdot}_{X,t} \leq \norm{\cdot}_{X,s}$. Suppose that $(\sy^n)$ is a sequence of processes $\sy^n:\Omega \mapsto X_T$, $\norm{\sy^n}_{X,\cdot}$ is adapted and $\mathbbm{P}-a.s.$ continuous, $\sy^n \in L^2\left(\Omega;X_T\right)$, and such that $\sup_{n}\norm{\sy^n}_{X,0} \in L^\infty\left(\Omega;\R\right)$. For any given $M >1$ define the stopping times
    \begin{equation} \label{another taumt}
        \tau^{M,T}_n := T \wedge \inf\left\{s \geq 0: \norm{\sy^n}_{X,s}^2 \geq M + \norm{\sy^n}_{X,0}^2 \right\}.
    \end{equation}
Furthermore suppose \begin{equation} \label{supposition 1}
    \lim_{m \rightarrow \infty}\sup_{n \geq m}\mathbbm{E}\left[\norm{\sy^n-\sy^m}^2_{X,\tau
_{m}^{M,t}\wedge \tau_{n}^{M,t}} \right] =0
\end{equation}
and that for any stopping time $\gamma$ and sequence of stopping times $(\delta_j)$ which converge to $0$ $\mathbbm{P}-a.s.$, \begin{equation} \label{supposition 2} \lim_{j \rightarrow \infty}\sup_{n\in\N}\mathbbm{E}\left(\norm{\sy^n}_{X,(\gamma + \delta_j) \wedge \tau^{M,T}_n}^2 - \norm{\sy^n}_{X,\gamma \wedge \tau^{M,T}_n}^2 \right) =0.
\end{equation}
Then there exists a stopping time $\tau^{M,T}_{\infty}$, a process $\sy:\Omega \mapsto X_{\tau^{M,T}_{\infty}}$ whereby $\norm{\sy}_{X,\cdot \wedge \tau^{M,T}_{\infty}}$ is adapted and $\mathbbm{P}-a.s.$ continuous, and a subsequence indexed by $(m_j)$ such that 
\begin{itemize}
    \item $\tau^{M,T}_{\infty} \leq \tau^{M,T}_{m_j}$ $\mathbbm{P}-a.s.$,
    \item $\lim_{j \rightarrow \infty}\norm{\sy - \sy^{m_j}}_{X,\tau^{M,T}_{\infty}} = 0$ $\mathbbm{P}-a.s.$.
\end{itemize}
Moreover for any $R>0$ we can choose $M$ to be such that the stopping time \begin{equation} \label{another tauR}
        \tau^{R,T} := T \wedge \inf\left\{s \geq 0: \norm{\sy}_{X,s\wedge\tau^{M,T}_{\infty}}^2 \geq R \right\}
    \end{equation}
satisfies $\tau^{R,T} \leq \tau^{M,T}_{\infty}$ $\mathbbm{P}-a.s.$. Thus $\tau^{R,T}$ is simply $T \wedge \inf\left\{s \geq 0: \norm{\sy}_{X,s}^2 \geq R \right\}$.

\end{proposition}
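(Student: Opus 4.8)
The plan is to separate two stages: the construction of the limiting pair $(\sy,\tau^{M,T}_\infty)$ together with the two bullet points, which follows the template of \cite{glatt2009strong} Lemma 5.1, and the genuinely new ``moreover'', which rests on identifying $\tau^{M,T}_\infty$ as a first hitting time of the limit and is where the full-time equicontinuity hypothesis (\ref{supposition 2}) is indispensable. For the first stage I would exploit the Cauchy hypothesis (\ref{supposition 1}): by Chebyshev it lets me pass to a subsequence $(m_j)$ along which $\mathbbm{E}\norm{\sy^{m_{j+1}}-\sy^{m_j}}^2_{X,\tau^{M,T}_{m_j}\wedge\tau^{M,T}_{m_{j+1}}}$ is summable. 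Following \cite{glatt2009strong} this produces a stopping time $\tau^{M,T}_\infty$ with $\tau^{M,T}_\infty\le\tau^{M,T}_{m_j}$; since $\norm{\cdot}_{X,s}$ is nondecreasing in $s$, the Cauchy bounds transfer to $[0,\tau^{M,T}_\infty]$, so $(\sy^{m_j})$ is $\mathbbm{P}$-a.s. Cauchy in the complete space $X_{\tau^{M,T}_\infty}$ and converges to the required $\sy$ in $\norm{\cdot}_{X,\tau^{M,T}_\infty}$. Adaptedness and a.s. continuity of $\norm{\sy}_{X,\cdot\wedge\tau^{M,T}_\infty}$ are inherited from the $\sy^{m_j}$, and the a.s. positivity of $\tau^{M,T}_\infty$ comes from (\ref{supposition 2}) with $\gamma=0$, exactly as the time-zero equicontinuity is used in \cite{glatt2009strong}.

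The core of the argument is then the hitting-time characterisation
\[
\norm{\sy}^2_{X,\tau^{M,T}_\infty}=M+\norm{\sy}^2_{X,0}\quad\text{on }\{\tau^{M,T}_\infty<T\}.
\]
The bound ``$\le$'' is routine: for fixed $s<\tau^{M,T}_\infty\le\tau^{M,T}_{m_j}$ the definition of $\tau^{M,T}_{m_j}$ gives $\norm{\sy^{m_j}}^2_{X,s}<M+\norm{\sy^{m_j}}^2_{X,0}$, and letting $j\to\infty$ (using the a.s. convergence and $\norm{\sy^{m_j}}_{X,0}\to\norm{\sy}_{X,0}$) and then $s\uparrow\tau^{M,T}_\infty$ with continuity yields $\norm{\sy}^2_{X,\tau^{M,T}_\infty}\le M+\norm{\sy}^2_{X,0}$. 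The reverse bound is the substance of the novelty and is where I expect the main obstacle. The idea is to transfer the blow-up of the approximants to the limit: since $\norm{\sy^{m_j}}^2_{X,\tau^{M,T}_{m_j}}=M+\norm{\sy^{m_j}}^2_{X,0}$ on $\{\tau^{M,T}_{m_j}<T\}$, it suffices to prove the increment $\norm{\sy^{m_j}}^2_{X,\tau^{M,T}_{m_j}}-\norm{\sy^{m_j}}^2_{X,\tau^{M,T}_\infty}$ is negligible in the limit.

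This is exactly what (\ref{supposition 2}) should deliver, and making it rigorous is the delicate point. Applying (\ref{supposition 2}) with $\gamma=\tau^{M,T}_\infty$ and a \emph{deterministic} increment $\delta\equiv\varepsilon$ bounds $\mathbbm{E}[\norm{\sy^n}^2_{X,(\tau^{M,T}_\infty+\varepsilon)\wedge\tau^{M,T}_n}-\norm{\sy^n}^2_{X,\tau^{M,T}_\infty}]$ uniformly in $n$ by a quantity vanishing as $\varepsilon\to0$; taking $n=m_j$, restricting to $\{\tau^{M,T}_{m_j}\le\tau^{M,T}_\infty+\varepsilon\}$ where the capped time collapses to $\tau^{M,T}_{m_j}$, and then sending first $j\to\infty$ and afterwards $\varepsilon\to0$ forces the increment to $0$ and hence the reverse inequality. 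The crux is that this requires the genuine a.s. convergence $\tau^{M,T}_{m_j}\to\tau^{M,T}_\infty$ (so that $\{\tau^{M,T}_{m_j}\le\tau^{M,T}_\infty+\varepsilon\}$ has full probability in the limit), which must be extracted, along a further subsequence, from the construction together with (\ref{supposition 2}) via a Markov-inequality estimate. Reconciling the domination $\tau^{M,T}_\infty\le\tau^{M,T}_{m_j}$ with true convergence of the stopping times, and legitimately evaluating (\ref{supposition 2}) at the random time $\gamma=\tau^{M,T}_\infty$, is the hardest part of the proof; the surrounding interchanges of limits are bounded convergence, the uniform integrability being supplied by $\sup_n\norm{\sy^n}_{X,0}\in L^\infty$.

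Granting the characterisation, the conclusion is immediate. Given $R>0$ set $M:=\max\{R,2\}$, which satisfies $M>1$. On $\{\tau^{M,T}_\infty<T\}$ we have $\norm{\sy}^2_{X,\tau^{M,T}_\infty}=M+\norm{\sy}^2_{X,0}\ge M\ge R$, so, $s\mapsto\norm{\sy}^2_{X,s}$ being continuous and nondecreasing, it attains the level $R$ no later than $\tau^{M,T}_\infty$; on $\{\tau^{M,T}_\infty=T\}$ one trivially has $\tau^{R,T}\le T=\tau^{M,T}_\infty$. Hence $\tau^{R,T}\le\tau^{M,T}_\infty$ $\mathbbm{P}$-a.s. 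In particular $s\wedge\tau^{M,T}_\infty=s$ for every $s\le\tau^{R,T}$, so the cap in (\ref{another tauR}) may be dropped and $\tau^{R,T}=T\wedge\inf\{s\ge0:\norm{\sy}^2_{X,s}\ge R\}$, as stated.
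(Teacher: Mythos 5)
The paper itself contains no argument for this proposition --- its ``proof'' is the citation to \cite{goodair2024weak} Proposition 6.1 --- so your proposal must stand on its own. Much of it does: the Glatt-Holtz--Ziane extraction from (\ref{supposition 1}), positivity of $\tau^{M,T}_\infty$ via (\ref{supposition 2}) with $\gamma=0$, the inequality $\norm{\sy}^2_{X,\tau^{M,T}_\infty}\le M+\norm{\sy}^2_{X,0}$, and the reduction of the ``moreover'' to the lower bound $\norm{\sy}^2_{X,\tau^{M,T}_\infty}\ge M$ on $\{\tau^{M,T}_\infty<T\}$ are all sound, as is the mechanics of your limit passage: restricting the nonnegative increment in (\ref{supposition 2}) at $\gamma=\tau^{M,T}_\infty$ to the event $\{\tau^{M,T}_{m_j}\le\tau^{M,T}_\infty+\varepsilon\}$, where it collapses to $M+\norm{\sy^{m_j}}^2_{X,0}-\norm{\sy^{m_j}}^2_{X,\tau^{M,T}_\infty}$, with uniform integrability from $\norm{\sy^{m_j}}^2_{X,\tau^{M,T}_\infty}\le M+\norm{\sy^{m_j}}^2_{X,0}$ and the $L^\infty$ bound at time zero.

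The step you defer, however --- the convergence $\tau^{M,T}_{m_j}\to\tau^{M,T}_\infty$ in probability, which you say ``must be extracted \dots via a Markov-inequality estimate'' --- is not a deferrable technicality: for the construction as you have arranged it, it is simply false, and everything in your stage two hinges on it. Your stage one delivers domination $\tau^{M,T}_\infty\le\tau^{M,T}_{m_j}$ for \emph{all} $j$, which forces $\tau^{M,T}_\infty\le\inf_j\tau^{M,T}_{m_j}$, and a single ``early outlier'' then ruins the claimed identity. Concretely: take $\sy^{m_1}$ deterministic with $\norm{\sy^{m_1}}^2_{X,s}$ rising continuously to the level $M+\norm{\sy^{m_1}}^2_{X,0}$ at some $t^*<T$, while all $\sy^{m_j}$, $j\ge2$, remain uniformly small and convergent, so $\tau^{M,T}_{m_j}=T$ for $j\ge2$. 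Hypothesis (\ref{supposition 1}) survives because it constrains only the tail ($\sup_{n\ge m}$ as $m\to\infty$, so the fixed index $m_1$ is irrelevant), and (\ref{supposition 2}) survives because for each fixed $n$ it is automatic from the a.s.\ continuity of $\norm{\sy^n}_{X,\cdot}$ and dominated convergence, its content being uniformity in the tail. All conclusions of your stage one hold with $\tau^{M,T}_\infty=t^*$, yet $\tau^{M,T}_{m_j}\to T\neq\tau^{M,T}_\infty$, your events $A_{j,\varepsilon}$ are eventually null, the limit $\sy$ is small at $t^*$, and both your characterisation $\norm{\sy}^2_{X,\tau^{M,T}_\infty}=M+\norm{\sy}^2_{X,0}$ and the ``moreover'' fail for this $\tau^{M,T}_\infty$. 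No Borel--Cantelli refinement of an already-fixed subsequence can repair this, since the obstruction sits at the head of the sequence rather than in the tail, while discarding initial indices changes $\inf_j\tau^{M,T}_{m_j}$ and so changes $\tau^{M,T}_\infty$: the hitting-time control must be built into the joint selection of the subsequence and the stopping time from the outset, which is precisely the content of \cite{goodair2024weak} Proposition 6.1 and the genuine novelty beyond \cite{glatt2009strong} Lemma 5.1. In short, you have correctly located the crux but supplied no argument for it, and the statement you would need is false without re-engineering the construction.
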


\begin{proof}
    See [\cite{goodair2024weak}] Proposition 6.1.
\end{proof}

%%%%%%%%%%%%%%%%%%%%%%%%%%%%%%%%%%%%%%%%%%%%%%%%%%%%%%%%%%%%%%%%%%%
%%                                                               %%
%% Use the two commands below for producing your bibliography    %%
%% with bibtex, then comment again the commands and include the  %%
%% content of the .bbl file in this file below the commands.     %%
%%                                                               %%
%%%%%%%%%%%%%%%%%%%%%%%%%%%%%%%%%%%%%%%%%%%%%%%%%%%%%%%%%%%%%%%%%%%

%\bibliographystyle{amsplain}
%\bibliography{yourbibfilename}

% add below the content of your .bbl file produced by bibtex.

%%%%%%%%%%%%%%%%%%%%%%%%%%%%%%%%%%%%%%%%%%%%%%%%%%%%%%%%%%%%%%%%%%%
%%                                                               %%
%% You may add acknowledgments (optional).                       %%
%%                                                               %%
%%%%%%%%%%%%%%%%%%%%%%%%%%%%%%%%%%%%%%%%%%%%%%%%%%%%%%%%%%%%%%%%%%%
\bibliographystyle{newthing}
\bibliography{mybibo}

\begin{thebibliography}{10}
\providecommand{\url}[1]{{#1}}
\providecommand{\urlprefix}{URL }
\expandafter\ifx\csname urlstyle\endcsname\relax
  \providecommand{\doi}[1]{DOI~\discretionary{}{}{}#1}\else
  \providecommand{\doi}{DOI~\discretionary{}{}{}\begingroup \urlstyle{rm}\Url}\fi

\bibitem{agresti2022nonlinear}
Agresti, A., Veraar, M.: Nonlinear parabolic stochastic evolution equations in critical spaces part II: Blow-up criteria and instantaneous regularization.
\newblock Journal of Evolution Equations \textbf{22}(2), 56 (2022)

\bibitem{agresti2024critical}
Agresti, A., Veraar, M.: The critical variational setting for stochastic evolution equations.
\newblock Probability Theory and Related Fields \textbf{188}(3), 957--1015 (2024)

\bibitem{alonso2021local}
Alonso-Or{\'a}n, D., Rohde, C., Tang, H.: A local-in-time theory for singular SDEs with applications to fluid models with transport noise.
\newblock Journal of Nonlinear Science \textbf{31}(6), 98 (2021)

\bibitem{debussche2011local}
Debussche, A., Glatt-Holtz, N., Temam, R.: Local martingale and pathwise solutions for an abstract fluids model.
\newblock Phys. D \textbf{240}(14), 1123--1144 (2011)

\bibitem{glatt2009strong}
Glatt-Holtz, N., Ziane, M., et~al.: Strong pathwise solutions of the stochastic Navier-Stokes system.
\newblock Advances in Differential Equations \textbf{14}(5/6), 567--600 (2009)

\bibitem{goodair2022stochastic}
Goodair, D.: Stochastic Calculus in Infinite Dimensions and SPDEs.
\newblock arXiv preprint arXiv:2203.17206  (2022)

\bibitem{goodair2024weak}
Goodair, D.: Weak and Strong Solutions to Nonlinear SPDEs with Unbounded Noise.
\newblock arXiv preprint arXiv:2401.10076  (2024)

\bibitem{goodair20233d}
Goodair, D., Crisan, D.: On the 3D Navier-Stokes Equations with Stochastic Lie Transport.
\newblock in: Stochastic Transport in Upper Ocean Dynamics II: STUOD 2022 Workshop, London, UK, September 26--29, vol.~11, p.~53. Springer Nature (2023)

\bibitem{goodair2023existence}
Goodair, D., Crisan, D., Lang, O.: Existence and uniqueness of maximal solutions to SPDEs with applications to viscous fluid equations.
\newblock Stochastics and Partial Differential Equations: Analysis and Computations pp. 1--64 (2023)

\bibitem{gyongy1980stochastic}
Gy{\"o}ngy, I., Krylov, N.V.: On stochastic equations with respect to semimartingales I.
\newblock Stochastics: An International Journal of Probability and Stochastic Processes \textbf{4}(1), 1--21 (1980)

\bibitem{krylov2007stochastic}
Krylov, N.V., Rozovskii, B.L.: Stochastic evolution equations.
\newblock in: Stochastic Differential Equations: Theory And Applications: A Volume in Honor of Professor Boris L Rozovskii, pp. 1--69. World Scientific (2007)

\bibitem{liu2010spde}
Liu, W., R{\"o}ckner, M.: SPDE in Hilbert space with locally monotone coefficients.
\newblock Journal of Functional Analysis \textbf{259}(11), 2902--2922 (2010)

\bibitem{liu2013local}
Liu, W., R{\"o}ckner, M.: Local and global well-posedness of SPDE with generalized coercivity conditions.
\newblock Journal of differential equations \textbf{254}(2), 725--755 (2013)

\bibitem{neelima2020coercivity}
Neelima, {\v{S}}i{\v{s}}ka, D.: Coercivity condition for higher moment a priori estimates for nonlinear SPDEs and existence of a solution under local monotonicity.
\newblock Stochastics \textbf{92}(5), 684--715 (2020)

\bibitem{pardoux1975equations}
Pardoux, E.: Equations aux d{\'e}riv{\'e}es partielles stochastiques monotones, These, Univ (1975)

\bibitem{robinson2016three}
Robinson, J.C., Rodrigo, J.L., Sadowski, W.: The three-dimensional Navier--Stokes equations: Classical theory, vol. 157.
\newblock Cambridge university press (2016)

\bibitem{rockner2022well}
R{\"o}ckner, M., Shang, S., Zhang, T.: Well-posedness of stochastic partial differential equations with fully local monotone coefficients.
\newblock Mathematische Annalen pp. 1--51 (2024)

\end{thebibliography}

\end{document}